\providecommand{\keywords}[1]
{
  \small	
  \textbf{\textit{Keywords:}}
}
\definecolor{db}{RGB}{0, 0, 130}
\definecolor{rp}{rgb}{0.25, 0, 0.75}
\definecolor{dg}{rgb}{0, 0.6, 0}
\newtheorem{theorem}{Theorem}[section]
\newtheorem{definition}{Definition}[section]
\newtheorem{assumption}[definition]{Assumption}
\newtheorem{lemma}[definition]{Lemma}
\newtheorem{proposition}[definition]{Proposition}
\newtheorem{remark}[definition]{Remark}
\numberwithin{equation}{section}
\def\pa{\partial}
\def\d{\delta}
\def\a{\alpha}
\def\e{\epsilon}
\def\Ac{\mathcal A}
\def\Jc{\mathcal J}
\def\Bc{\mathcal B}
\def\Fc{\mathcal F}
\def\Hc{\mathcal H}
\def\Lc{\mathcal L}
\def\Pc{\mathcal P}
\def\Sc{{\mathcal S}}
\def\Vc{\mathcal V}
\def\ut{\Tilde{u}}
\def\vt{\Tilde{v}}
\def\E{\mathbb{E}}
\def\F{\mathbb{F}}
\def\N{\mathbb N}
\def\R{\mathbb R}
\def\S{\mathbb S}
\def\vb{\bar{v}}
\def\ub{\bar{u}}
\def\Jb{\bar{J}}
\def\x{\times}
\def\Om{\Omega}
\def \endproof{\hbox{ }\hfill$\Box$}
\title{Comparison for semi-continuous viscosity solutions for second order PDEs on the Wasserstein space}
\author{Erhan Bayraktar
        \footnote{Department of Mathematics, University of Michigan. erhan@umich.edu}
        \and Ibrahim Ekren
        \footnote{Department of Mathematics, University of Michigan. iekren@umich.edu. I. Ekren is partially supported by the NSF grant DMS-2406240}
        \and Xihao He
        \footnote{Department of Mathematics, University of Michigan. hexihao@umich.edu}
        \and Xin Zhang
        \footnote{Department of Finance and Risk Engineering, New York University. xz1662@nyu.edu. X. Zhang is partially supported by the NSF Grant DMS-2508556}
        }
\date{\today}
\begin{document}

\maketitle

\begin{abstract}
In this paper, we prove a comparison result for semi-continuous viscosity solutions of a class of second-order PDEs in the Wasserstein space. This allows us to remove the Lipschitz continuity assumption with respect to the Fourier-Wasserstein distance in \cite{BaEkZh23} and obtain uniqueness by directly working in the Wasserstein space. In terms of its application, we characterize the value function of a stochastic control problem with partial observation as the unique viscosity solution to its corresponding HJB equation. Additionally, we present an application to a prediction problem under partial monitoring, where we establish an upper bound on the limit of regret using our comparison principle for degenerate dynamics.

% In \cite{BaEkZh23}, for a class of second-order PDEs in the Wasserstein space, the authors proved a comparison principle for Lipschitz continuous solutions with respect to the Fourier-Wasserstein distance. However, it is generally quite demanding to verify the Lispchitz property of solutions. To fix this issue, in this paper, we prove a comparison principle for semi-continuous functions, which is more convenient to apply as it is usually straightforward to verify the continuity w.r.t. the weak topology.

% In terms of its application, we characterize the value function of a stochastic control problem with partial observation as the unique viscosity solution to its corresponding HJB equation. To relax the assumption of regularity on the coefficients of controlled dynamics, we assume that the diffusion coefficient of the unobserved noise is uniformly elliptic and use the so-called commutator estimates to control errors that arise in the doubling variable argument. Additionally, we present an application to a prediction problem under partial monitoring, where we establish an upper bound on the regret limit using our comparison principle.
\end{abstract}

% REQUIRED
\begin{keywords}A 
Wasserstein space, second-order PDEs, viscosity solutions, comparison principle
\end{keywords}

\section{Introduction}
In this paper, we consider a class of second-order PDEs on Wasserstein space
\begin{align}\label{eq:intro}
-\left(\partial_t u +G(\cdot, D_{\mu} u, D_{x\mu} u, \mathcal{H}u) \right)(t,\mu)=0, \quad (t,\mu) \in [0,T) \times \Pc_2(\R^d), 
\end{align}
where $G$ is a generator satisfying suitable continuity conditions, and 
\begin{align*}
    \mathcal{H}u(t,u):= \int D_{x \mu} u(t,\mu)(x) \, \mu(dx) + \int \int D_{\mu\mu}^2 u(t,\mu)(x,y) \, \mu(dx) \mu(dy) \in \mathbb R^{d \times d}
\end{align*}
is the so-called partial Hessian introduced by \cite{BaEkZh23,chow2019partial}. 
Throughout the paper,  $\Pc_2(\mathbb R^d)$ is equipped with the weak topology. We prove a comparison principle for \emph{semi-continuous} viscosity solutions and provide two applications in filtering problems and prediction problems.

PDEs on Wasserstein space appear naturally in filtering problems, where a controller can only obtain access to the conditional law of the state variable due to partial observations \cite{BaEkZh23,martini2023kolmogorov,bandini2019randomized}. These equations have also garnered significant attention in the context of mean-field games and McKean-Vlasov control problems (see, e.g., \cite{cdll2019,MR3752669,MR3753660,MR4499277,MR3739204,cosso2021master}), where the value functions capture interactions within a large population of particles, leading to a state space described by $\mathcal{P}_2(\mathbb{R}^d)$, the space of probability measures with finite second moment.

In \cite{BaEkZh23}, by a doubling variable argument, the authors proved a comparison principle of \eqref{eq:intro} for Lipschitz continuous solutions w.r.t. the Fourier-Wasserstein distance, denoted by $\rho_F$. Since Lipschitz continuity is a metric property and $\rho_F$ is not metrically equivalent to the Wasserstein distance, it is generally quite demanding to verify the Lispchitz property w.r.t. $\rho_F$. To fix this issue, we provide a comparison principle for semi-continuous functions in Theorem~\ref{thm:comparison}, which is more convenient to apply, as it is usually straightforward to verify the continuity w.r.t. the weak topology. Additionally, comparison principles for semi-continuous solutions could be used in the convergence analysis of numerical schemes.

As the first application, we consider a stochastic control problem with partial observation. Under a different set of assumptions from \cite{BaEkZh23}, we characterize the value function as the unique viscosity solution to a PDE of the form~\eqref{eq:intro}. In some sense, the assumption used in the current paper is much weaker: as there is no need to verify the Lipschitz continuity of the value function, we do not assume a lot of regularity of coefficients as in \cite{BaEkZh23}. On the other hand, to weaken the assumption on dynamics drift, we assume that the generator $G$ is uniformly elliptical in terms of $D_{x\mu} u$. Under this condition, we show that the difference in $D_{\mu} u$ terms arising in the doubling variable argument can be controlled by the difference in $D_{x \mu}$ terms \footnote{Mete Soner's Van Eenam Lectures at the University of Michigan were influential in thinking about this cancellation.}, using the so-called commutator estimates inspired by \cite{gozzi2000hamilton}. Note that unlike \cite{gozzi2000hamilton}, we directly work on the Wasserstein space $\Pc_2(\mathbb R^d)$ and we do not need to lift our equation to an $L^2$-space.

We provide another application in prediction problems. It is well-known that solutions to PDEs can be approximated using machine learning-based numerical methods. On the other hand, some learning problems such as expert prediction can be viewed as finite difference algorithms of PDEs; see \cite{2019arXiv190202368B,2020arXiv200308457B,MR4120922,MR4053484,2020arXiv200813703C,2020arXiv200712732D,JMLR:v24:22-1001, MR4253765}. Therefore, the long-time behavior of such learning problems is characterized by certain parabolic PDEs. In \cite{JMLR:v24:22-1001}, the authors interpreted a learning problem as a discrete time zero-sum game with incomplete information. After properly scaling both in time and space variables, the value function converges to a second-order parabolic equation in Wasserstein space of type \eqref{eq:intro} in a formal way; see \cite[Equation (4.11)]{JMLR:v24:22-1001}. Thanks to the comparison principle established in this paper, the limit supremum of discrete time value functions is indeed bounded by the viscosity solution to \cite[Equation (4.11)]{JMLR:v24:22-1001}. We emphasize that for this prediction problem the idiosyncratic noise can be degenerate. However, the generator of the PDE still satisfies the estimates needed for the comparison principle.

The most relevant paper to ours is \cite{daudin2025well}, which proves the existence and a semi-continuous comparison principle for a class of \emph{semi-linear} PDEs on Wasserstein space $\Pc_2(\mathbb T^d)$, i.e., the equation is linear in $D_{x\mu} u$ and $\mathcal{H}u$. Although their assumption on generator $G$ is relative weaker, their comparison cannot be applied to our filtering problem and prediction problem as our unobserved noise term is state dependent and controlled, for which we need the commutator type estimates. Moreover, our equation is defined on $\Pc_2(\mathbb R^d)$, and hence we use the moment penalization technique of \cite{MeYa23} to address the non-compactness issue of $\Pc_2(\mathbb R^d)$. 

Let us discuss other literature. The idea of using the Fourier-Wasserstein distance in the doubling variable argument was brought up in \cite{SoYa24, MeYa23} to show the comparison principle for first-order PDEs in the Wasserstein space. It was then adopted in \cite{BaEkZh23,daudin2025well} for the comparison of second-order PDEs involving the partial Hessian $\mathcal{H}$. Using instead the Wasserstein-$2$ distance as the penalization function in the doubling variable, \cite{2023arXiv230604283B,BeLi24,SaBe24} prove the comparison principles for essentially first-order PDEs. There is another way of proving the comparison for PDEs arising from McKean-Vlasov control problems. Using the particle approximation, \cite{cosso2021master} proved that there are regular functions that are almost solutions up to small errors, and then they proved comparison for first-order equations by a verification argument. Based on similar methods, \cite{BCEQTZ25,2023arXiv231210322C} proved the comparison for other classes of second-order PDEs. 

In \cite{zhou2024viscosity}, Touzi, Zhang, and Zhou tackled the HJB equations on the Wasserstein space by lifting them to the process space, where they introduced a novel concept of viscosity solutions, demonstrating both existence and uniqueness under the assumption of Lipschitz continuity. PDEs in the space of measures also appear in mean-field optimal stopping problems \cite{MR4604196,MR4613226,2023arXiv230709278P}, mean-field control of jump processes \cite{burzoni2020viscosity}, and control problems of occupied processes \cite{STZ24}. Based on the viscosity theory and the comparison principle, the convergence of particle systems in mean field control problems was studied in \cite{2022arXiv221016004T,2022arXiv221100719T,MR4595996}. The convergence rate for PDEs in the Wasserstein space was obtained in \cite{2023arXiv231211373C,2023arXiv230508423D,2022arXiv220314554C,CDJM24,bayraktar2024convergence}. Assuming the existence of smooth solutions to mean-field PDEs, \cite{MR4507678} obtained the optimal convergence rate using a verification argument.

The remainder of the paper is organized as follows. In Section 2, we present notation and definitions of viscosity solutions. In Section 3, we prove a general comparison principle for semi-continuous solutions. In Section 4, we provide two applications in stochastic control with partial observation and prediction problems, respectively. In Section 5, we first give the commutator estimates and use these to prove the results in Section 4.

\section{Preliminaries}
    
    \subsection{Notations}
    Let $T$ be the finite horizon with $0 < T < +\infty$.
    For any measurable space $(\Om,\mathfrak{F})$,
    let $\Pc(\Om)$ denote the collection of all probability measures on $(\Om,\mathfrak{F})$. We take a positive constant $\lambda> \lfloor d/2 \rfloor +2$. 
 For any $\rho \in \mathcal{P}_2(\mathbb R^d)$ and any bounded continuous function $f:\mathbb R^d \to \mathbb R$, we denote $\langle f,\rho\rangle := \int f(x) \, \rho(dx)$.   
 The Fourier Wasserstein $2$-distance $\rho_F$ between two probability measures $\rho_1$ and $\rho_2$ with $\rho_1, \rho_2 \in \Pc_2(\R^d)$ is defined as
    \begin{equation*}
        \rho_F^2(\rho_1,\rho_2)
        ~ := ~
        \int_{\R^d}\frac{|F_k(\rho_1-\rho_2)|^2}{(1 + |k|^2)^\lambda}dk,
    \end{equation*}
    where for $k \in \R^d$, $f_k(x) := (2\pi)^{-\frac{d}{2}}e^{i k \cdot x}$, and $F_k(\rho) := \langle f_k, \rho \rangle$. For any complex number $z$, we denote its conjugate by $z^*$. For $\theta_1=(t_1,\mu_1,m_1), \, \theta_2=(t_2,\mu_2,m_2) \in \Theta:=[0,T] \times \Pc_2(\R^d) \x \R^d$, define 
    $$
        d_F^2(\theta_1,\theta_2)=|t_1-t_2|^2+|m_1-m_2|^2+\rho_F^2(\mu_1,\mu_2).
    $$
    
    \vspace{0.5em}

    Now denote by $\S_d$ the space of all $d \x d$-dimensional matrices with real entries, equipped with the Frobenius Norm $|\cdot|$. Let us denote by $B^d_l$ ($B^{d\times d}_l$) the set of Borel measurable functions on $\R^d$ taking values in $\R^d$ ($\R^{d\times d}$) with at most linear growth, and define $|f|_l=\sup_{x \in {\mathbb R^d}} \frac{|f(x)|}{1+|x|}$ for any $f \in B^d_l, B^{d \x d}_l$.

    \begin{remark}
        $\mathrm{(i)}$
        According to \cite{BaEkZh23,MeYa23} and the references therein, the Fourier-Wasserstein distance $\rho_F$ metrizes  the weak topology on $\mathcal{P}(\mathbb R^d)$. Moreover, $\rho_F$ is bounded by the Wasserstein-$1$ distance on $\mathcal{P}_1(\mathbb R^d)$, i.e., $\rho_F \leq C W_1$ for some positive constant $C$. 

      \noindent $\mathrm{(ii)}$
        The space $(\Pc(\R^d),\rho_F)$ is not complete, and we have the following counterexample:
        Take $\rho$ to be the density of the $d$-dimensional normal distribution, and set
        \[
        \rho_n(x)=n^{-d}\rho(x/n),\quad
        \mu_n=\rho_n(x)\,dx,
        \quad
        \mu:= \rho(x)\,dx.
        \]
        Then $F_k(\mu_n)=F_{nk}(\mu)=e^{-|nk|^2/2}$. Hence
        \begin{align*}
        \rho^2_F(\mu_n,0)
        =\int_{\mathbb{R}^d} \frac{|F_k(\mu_n)|^2}{(1+|k|^2)^{\lambda}}\,\,dk 
        = \int_{\mathbb{R}^d}\!\frac{e^{-|nk|^2}}{(1+|k|^2)^{\lambda}}dk
        \xrightarrow{n \longrightarrow +\infty} 0. 
        \end{align*}
        So $\mu_n\to 0$ in $H^{-s}$, but $0$ is not a probability measure. Thus
        $\mathcal P(\mathbb{R}^d)$ is not $\rho_F$-complete.
        
        \end{remark}

    \subsection{Derivatives on the space of probability measures}
    
    \begin{definition}
        \begin{enumerate}[(i)]
            \item A function $u: \Pc_2(\R^d) \longrightarrow \R$ is said to have a linear functional derivative if there exists
                \begin{equation*}
                    \delta_{\mu}u: \Pc_2(\R^d) \x \R^d \longrightarrow \R
                \end{equation*}
            such that $\delta_\mu u$ is continuous in the product topology and
                \begin{itemize}
                    \item for each $\mu \in \Pc_2(\R^d)$, the mapping $x \longmapsto \delta_{\mu} u(\mu,x) \in B_l$, 
                    \item for all $m_1, m_2 \in \Pc_2(\R^d)$,
                        \begin{equation}
                            u(m_1) - u(m_2) 
                            ~ = ~
                            \int_0^1 \int_{\R^d}
                                \delta_{\mu} u(\lambda m_1 + (1 - \lambda)m_2,x)(m_1 - m_2)(dx) d\lambda.
                        \end{equation}
                \end{itemize}
            \item A function $u: \Pc_2(\R^d) \longrightarrow \R$ is said to have a second-order linear functional derivative, if for any $x \in \mathbb{R}^d$, $\mu \mapsto \delta_{\mu}u$ has a linear functional derivative, denoted by $\delta_{\mu}^2 u (\mu)(x,y)$, and $(\mu,x,y) \mapsto \delta^2_{\mu} u(\mu,x,y)$ is continuous in the product topology. 
            \item A function $u: \mathcal{P}_2(\mathbb R^d) \longrightarrow \R$ is said to be second-order Lions differentiable, if $\mathcal{P}_2(\mathbb R^d) \times \mathbb R^d \times \mathbb R^d \ni (\mu,x,y) \to (\pa_x\delta_\mu u, \pa^2_{xx}\delta_\mu u, \pa_x\pa_y\delta^2_\mu u)(\mu,x,y) \in B^d_l \x B_l^{d \x d} \x B_l^{d \x d}$ exists and is continuous in all its variables. We denote $ D_\mu u := \pa_x\delta_\mu u$,$D_{x\mu} u := \pa^2_{xx}\delta_\mu u$, and $D^2_{\mu\mu} u:= \pa_x\pa_y\delta^2_\mu u$.
        \end{enumerate}
    \end{definition}
    For any second-order Lions differentiable function $\mu \longmapsto u(\mu)$, we define the partial Hessian
    \begin{align*}
        \Hc u :=& \pa^2_{zz}u((I_d + z)_\sharp \mu)|_{z = 0}
        =
        \displaystyle
            \int_{\R^d}\int_{\R^d}D^2_{\mu\mu}u(\mu,x,y)\mu(dx)\mu(dy) 
                + \int_{\R^d} D_{x\mu} u(\mu,x)\mu(dx).
    \end{align*}
    
    \subsection{The non-linear PDE and its viscosity solution}

        Our aim is to obtain the comparison principle for viscosity solutions of the PDE 
        \begin{align}\label{eq:HJB}
            -\left(\pa_t u+G(\cdot, D_\mu u,D_{x\mu}u,\Hc u)\right)(t,\mu)=0,
            ~(t,\mu) \in [0,T) \x \Pc_2(\R^d),
        \end{align}
        where $G$ is a function from $[0,T] \x \Pc_2(\R^d) \x B^d_l \x B^{d\x d}_{l} \x \S_d$ to $\R$.
        As $\mathcal{H}u$ is the second-order derivative in the barycenter of measures, if $u$ is a classical solution to \eqref{eq:HJB}, then we can verify that $\ub(t,\mu,m) := u(t, (I_d + m)_\sharp\mu)$ satisfies the new PDE
        \begin{align}\label{eq:NewHJB}
            -(\pa_t \ub + G^e(\cdot,D_\mu\ub,D_{x\mu}\ub,\pa^2_{mm}\ub))(t,\mu,m) ~ = ~ 0,
            ~(t,\mu, m) \in [0,T) \x \Pc_2(\R^d) \x \R^d,
        \end{align}
        where for any $(t,\mu,m, p,q, X) \in   [0,T]  \x \Pc_2(\R^d) \x \R^d \x B_l^d \x B_l^{d \x d} \x \S_{d}$,
        $$
            G^e(t,\mu, m,p,q,X) := G(t,(I_d + m)_\sharp\mu,p(\cdot-m),q(\cdot - m),X).
        $$
        The partial Hessian term $\mathcal{H}u$ in \eqref{eq:HJB} is turned into a finite dimensional second-order derivative $\partial_{mm}$ in \eqref{eq:NewHJB}. For the convenience to apply Ishii's lemma, in the rest of the paper, we consider the equation \eqref{eq:NewHJB} instead of \eqref{eq:HJB}. 
     
    \begin{remark}
    In fact, we can consider a more general case where $u$ and $G$ depend on a variable $y \in \mathbb{R}^d$, and $G$ additionally depends on $u$. Then a comparison principle can be proved using almost the same techniques and arguments as in this paper. 
    \end{remark}
    
        We introduce its second-order jets and viscosity solution as follows:
    \begin{definition}
         We say a function $\ub:\Theta=[0,T] \times \Pc_2(\R^d) \x \R^d \longrightarrow \R$ is partial $C^2$-regular
            % w.r.t. some bounded, measurable function $p: \R^d \longrightarrow \R$ 
            if, for any $\mu \in \mathcal{P}_2(\mathbb R^d)$,
            $(t, m) \in \Theta \longmapsto \ub(t,\mu, m)$ is $C^{1,2}$, $(t,\mu,m) \mapsto \ub(t,\mu,m)$ is continuous, and $(D_\mu \ub, D_{x\mu} \ub) \in B^d_l \x B_l^{d \x d}$ exists and is continuous in all its variables.
    \end{definition}

    \begin{definition}
        Let $\ub:\Theta \longmapsto \R$ be a locally bounded function, and $\theta \in [0,T) \x \Pc_2(\R^d) \x \R^d $.
        We define the (partial) second-order superjet 
        $J^{2,+}\ub(\theta) \subset \R  \x B_l^d \x B_l^{d \x d} \x \S_{d}$ of $\ub$ at $\theta$ by
        \begin{align*}
            J^{2,+}\ub(\theta) 
            := 
            \{(\pa_t\phi,  D_\mu \phi, D_{x\mu}\phi, \pa^2_{mm}\phi)(\theta): ~
            & \ub - \phi~\mbox{has a local maximum at}~\theta, \\ 
            & \phi ~\mbox{is partial}~C^2\mbox{-regular}\},
        \end{align*}
        the second-order subjet $J^{2,-}\ub(\theta) := -J^{2,+}(-\ub)(\theta)$, and the closure of the jets as
        \begin{align*}
            \Jb^{2,+}\ub(\theta) 
            := &
            \{(b, p, q, X) \in \R  \x B_l^d \x B_l^{d \x d} \x \S_{d}: \\
            & \ub - \phi^n~\mbox{has a local maximum at}~\theta_n,~
             \phi^n~\mbox{is partial}~C^2\mbox{-regular}; ~\mbox{as}~n \longrightarrow +\infty, \\
             & (\pa_t\phi^n,D_\mu \phi^n, D_{x\mu}\phi^n, \pa^2_{mm}\phi^n)(\theta_n) \longrightarrow (b, p, q, X), 
             ~\theta_n \longrightarrow  (\theta), 
             \ub(\theta_n) \longrightarrow  \ub(\theta) \},
        \end{align*}
        and $\Jb^{2,-}\ub(\theta) := - \Jb^{2,+}(-\ub)(\theta) $.
    \end{definition}
    
    \begin{definition}
        We say that $u$ is a viscosity subsolution (supersolution) to \eqref{eq:HJB} if 
             \begin{align*}
                   &\bar u: [0,T]  \times \Pc_2(\R^d) \times \R^d \ni (t, \mu, m) \mapsto u(t , (I_d+m)_{\#}\mu),
             \end{align*}
        is a locally bounded function satisfying that for $(t,\mu, m) \in [0,T) \x \Pc_2(\R^d) \x \R^d$,
        \begin{equation*}
            - b - G^e(t,\mu,m, p, q, X) \le (\ge) ~0, 
            ~\mbox{for any}~
            (b, p, q, X) \in J^{2,+}\ub(t,\mu, m)~ (J^{2,-}\ub(t,\mu, m)).
        \end{equation*}
        % and a locally bounded function $v:[0,T] \x \Pc_2(\R^d) \x \R^d \longmapsto \R$ is a viscosity subsolution to \eqref{eq:NewHJB} 
        % if for all $(t,\mu) \in [0,T) \x \Pc_2(\R^d)$, 
        % one has the inequality
        % \begin{equation*}
        %     - b - G^e(t,\mu, p, q, X) \ge 0, 
        %     ~\mbox{for any}~
        %     (b, p, q, X) \in J^{2,-}\ub(t,\mu, m).
        % \end{equation*}        
    \end{definition}

\section{Ishii's Lemma and the Comparison principle}

    We equip the space $[0,T] \x \Pc_2(\R^d) \x \R^d $ with the product topology, where $\Pc_2(\R^d)$ is equipped with the weak convergence topology. Define an auxiliary function 
    \begin{align}\label{eq:vartheta}
        \vartheta: [0,T] \x \Pc_2(\R^d) \x \R^d \ni \theta = (t,\mu,m) \longmapsto  |m|^2~+\int_{\R^d} |x|^2 \, \mu(dx). 
    \end{align}    

  \begin{lemma}[Ishii's Lemma]\label{lemm:ishii}
    Suppose that $u, -v: [0,T] \x  \Pc_2({\R^d}) \rightarrow \R$ are bounded upper-semicontinuous functions.   
    For any $\delta>0$, introduce 
    \begin{align*}
        &\tilde u: [0,T] \times \Pc_2(\R^d) \times \R^d  \ni (t, \mu, m) \mapsto u(t, (I_d+m)_{\#}\mu)- \delta \vartheta(t, \mu, m), \\
        &\tilde v: [0,T] \times \Pc_2(\R^d) \times \R^d  \ni (s, \nu, n) \mapsto v(s, (I_d+n)_{\#}\nu)+ \delta \vartheta(s, \nu, n).
    \end{align*}
    Then there exists  a local maximum $ (\theta^*,\iota^*)=((t^*,\mu^*,m^*),(s^*,\nu^*,n^*))$ of 
            \begin{align}\label{eq:ishiimax}
             (\theta,\iota) \longmapsto \tilde u(\theta)- \tilde v(\iota) -\frac{1}{2\e} \left(|t-s|^2+|m-n|^2+\rho_F^2(\mu,\nu) \right) .
             \end{align}
            Assume $ \theta^*, \iota^*$ are in the interior $[0,T) \times \Pc_2(\R^d) \x \R^d$, and denote
        \begin{align}
            \Lc(\eta,\mu,\nu):=&2\int \frac{Re(F_k(\eta)(F_k(\mu)-F_k(\nu))^*)}{(1+|k|^2)^\lambda}dk, \label{eq:mathcalL}\\
\Phi(\mu):=&2\rho^2_F(\mu,\mu^*)+\Lc(\mu,\mu^*,\nu^*)\label{eq:defpsi},\\
            \Psi(\nu):=& 2\rho^2_F(\nu,\nu^*)-\Lc(\nu,\mu^*,\nu^*),\notag
        \end{align}
        where $F_k^*(\mu), F_k^*(\nu)$ are the conjugates of $F_k(\mu), F_k(\nu)$, respectively.
     Then for any $\a>0$, there exist $X^*,Y^*
    \in \mathbb{S}_{d}$ such that 
    \begin{align*}
     & \left(\frac{1}{\e}(t^*-s^*), 
        \frac{D_\mu \Phi(\mu^*)(\cdot)}{2\e},
        \frac{D_{x\mu} \Phi(\mu^*)(\cdot)}{2\e},X^* \right) 
        \in \bar J^{2,+} \tilde u(\theta^*), \\
    & \left(\frac{1}{\e}(t^*-s^*), 
        -\frac{D_\mu \Psi(\nu^*)(\cdot)}{2\e},
        -\frac{D_{x\mu} \Psi(\nu^*)(\cdot)}{2\e},-Y^* \right) 
        \in \bar J^{2,-} \tilde v (\iota^*),
x        \end{align*}
        as well as 
    \begin{align*}
        -\left(\frac{1}{\alpha}+\frac{2}{\e}\right)I_{2d}\leq 
        &\begin{pmatrix}
            X^*&0\\0& Y^*
        \end{pmatrix}
        \leq 
        \left(\frac{1}{\e}+\frac{2\alpha}{\e^2} \right)
        \begin{pmatrix}
            I_{d}&-I_{d}\\-I_{d}&I_{d}
        \end{pmatrix}.
    \end{align*}
    % where $A:=\frac{1}{2\e} \mathcal{H} d_F^2(\theta^*,\iota^*)$.
      
    \end{lemma}

    \begin{proof}
        Note that $(\mu, m) \mapsto (id+m)_{\#}\mu$ is continuous and $ \mu \mapsto \int_{\R^d}|x|^2 \, d \mu$ is lower semicontinuous with respect to the weak topology. Therefore, $\tilde u, - \tilde v$ are still upper-semicontinuous functions. Indeed for any sequence $(m_k,\mu_k) \to (m,\mu)$ and a bounded Lipschitz function $f \in C_b(\R^d; \R)$, 
        \begin{align*}
          & \left|  \lim\limits_{k \to \infty } \int f(x) \, (I_d+m_k)_{\#}\mu_k(dx)- \int f(x) \, (I_d+m)_{\#}\mu(dx) \right| \\
            & \leq Lip(f) \limsup_{k\to \infty} |m_k-m| +  \limsup_{k \to \infty }  \left| \int f(x+m) \, (\mu_k-\mu)(dx) \right|=0,
        \end{align*}
        where  $Lip(f)$ denotes the Lipschitz coefficient of $f$. Moreover for each $N>0$, 
        \begin{align*}
        \liminf_{k \to \infty} \int |x|^2 \, d \mu_k \geq \liminf_{k \to \infty} \int |x|^2\wedge N  \, d \mu_k = \int |x|^2 \wedge N \, d\mu,
        \end{align*}
        and hence $\liminf_{k \to \infty} \int |x|^2 \, d \mu_k \geq  \int |x|^2 \, d \mu$.
        
        Denote the maximum value of \eqref{eq:ishiimax} obtained by $a \in \R$. We consider the set $A$ of $(\theta,\iota)$ such that 
        \begin{align*}
             \tilde u(\theta)- \tilde v(\iota) -\frac{1}{2\e} \left(|t-s|^2+|m-n|^2+\rho_F^2(\mu,\nu) \right) >a-1,
        \end{align*}
        and hence 
        \begin{align*}
            \delta \left( \vartheta(\theta)+\vartheta(\iota) \right) \leq 1-a +\lVert u \rVert_{\infty}+\lVert v \rVert_{\infty}.
        \end{align*}
        Therefore the closure of $A$ is compact, and we denote by $(\theta^*,\iota^*)=((t^*,\mu^*,m^*),(s^*,\nu^*,n^*))$ a global maximum of \eqref{eq:ishiimax}. We assume that $t^*, s^* <T$.

        \medskip

        \noindent {\it Step 1: Reducing to a finite dimensional problem.}
        Define the functions
                \begin{align*}
                 u_1(t,\mu,m)&:= \tilde u(t,\mu,m) -\frac{1}{2\e}\Phi(\mu), \\
                 v_1(s,\nu,n)&:=\tilde v(s,\nu,n)+\frac{1}{2\e} \Psi(\nu),
            \end{align*}
            where we recall $\Psi$ and $\Phi$ are defined in equations \eqref{eq:mathcalL} and \eqref{eq:defpsi}.
            Given any Hilbert space $(L,\langle \cdot \rangle )$ and $a,b,c,d \in L$, it is straightforward that 
            \begin{align*}
            |a-b|^2+|c-d|^2-2\langle a-b,c-d\rangle = |a-b-(c-d)|^2 \leq 2 (|a-c|^2+|b-d|^2).
            \end{align*}
            Therefore we have 
            \begin{align}
               &|F_k(\mu)-F_k(\nu)|^2+|F_k(\mu^*)-F_k(\nu^*)|^2\notag\\
               &\quad\quad\leq2(|F_k(\mu)-F_k(\mu^*)|^2+|F_k(\nu)-F_k(\nu^*)|^2)\notag\\
               &  \quad \quad\quad+ 2Re((F_k(\mu)-F_k(\nu))(F_k(\mu^*)-F_k(\nu^*))^*). \label{ineq:1}
               \end{align}
         Hence it holds that 
        \begin{align*}
            2\rho^2_F(\mu,\mu^*)+2\rho^2_F(\nu,\nu^*)+\Lc(\mu,\mu^*,\nu^*)-\Lc(\nu,\mu^*,\nu^*)\geq \rho^2_F(\mu,\nu)+\rho^2_F(\mu^*,\nu^*).
        \end{align*}
        Thus, we have the inequality for any $(\theta,\iota) = ((t,\mu,m),(s,\nu,n))$
        \begin{align*}
             & u_1(t,\mu,m)-v_1(s,\nu,n)-\frac{1}{2\e}\left(|t-s|^2+|m-n|^2\right)\\
             &= \tilde u(t,\mu,m)-\tilde v(s,\nu,n)-\frac{1}{2\e}\left(|t-s|^2+|m-n|^2\right)\\
             &\quad -\frac{ 1}{2\e}\left(2\rho_F^2(\mu,\mu^*)+2\rho^2_F(\nu,\nu^*)\right)
                -\frac{1}{2\e}\left(\Lc(\mu,\mu^*,\nu^*)-\Lc(\nu,\mu^*,\nu^*)\right)\\
             &\leq \tilde u(t,\mu,m)-\tilde v(s,\nu,n)-\frac{1}{2\e}\left(|t-s|^2+|m-n|^2\right)
                -\frac{1}{2\e}(\rho^2_F(\mu,\nu)+\rho^2_F(\mu^*,\nu^*))\\
             &=\tilde u(\theta)-\tilde v(\iota)-\frac{1}{2\e} d_F^2(\theta,\iota)-\frac{1}{2\e}\rho^2_F(\mu^*,\nu^*).
        \end{align*}
        
        Furthermore, for $(\mu,\nu)=(\mu^*,\nu^*)$, the inequality \eqref{ineq:1} is an equality. 
        Thus, the function
        \begin{align}\label{eq:optu1v1}
        u_1(t,\mu, m)-v_1(s,\nu, n)-\frac{1}{2\e}\left(|t-s|^2+|m-n|^2\right)
        \end{align}
        admits the strict global maximum at $((t^*,\mu^*,m^*),(s^*,\nu^*,n^*))$.
        Define the mappings 
        \begin{align}
            [0,T]\times \R^d \ni  (t,m) \mapsto  u_2(t,m)&:=\limsup_{( \tilde t, \tilde m) \to (t,m)}\sup_{\mu\in \Pc_{2}(\R^d)} u_1(\tilde t, \mu,\tilde m) \label{eq:defu2},\\
            [0,T]\times \R^d \ni (s,n) \mapsto v_2(s,n)  &:=\liminf_{( \tilde s, \tilde n) \to (t,m)}\inf_{\nu \in \Pc_{2}(\R^d)} v_1(\tilde s,\nu, \tilde n).\label{eq:defv2} 
        \end{align}
        It can be easily seen that $u_2, -v_2$ are upper-semicontinuous. By the maximality of $ u_1(t,\mu, m)-v_1(s,\nu, n)-\frac{1}{2\e}\left(|m-n|^2+|t-s|^2\right)$ at $(\theta^*,\iota^*)$, the function 
        \begin{align}\label{eq:optfinite}
              u_2(t, m)-v_2(s, n)-\frac{1}{2\e}\left(|t-s|^2+|m-n|^2\right)
        \end{align}
         admits a global maximum at $((t^*,m^*),(s^*,n^*))$. 
        % Denoting $u_2^*$ the upper semicontinuous envelope of $u_2$ and $v_{2,*}$ the lower semicontinuous envelope of $v_2$, this maximality implies that 
        % \begin{align}\label{eq:optfinitestar}
        %       u_2^*(t,y,m)-v_{2,*}(s,z,n)-\frac{1}{2\e}\left(|t-s|^2+|y-z|^2+|m-n|^2\right)
        % \end{align}
        %  admits a global maximum at $((t^*,y^*,m^*),(s^*,z^*,n^*))$ and 
        %  $$u_2^*(t^*,y^*,m^*)-v_{2,*}(s^*,z^*,n^*)= u_2(t^*,y^*,m^*)-v_2(s^*,z^*,n^*).$$

         \medskip
         \noindent {\it Step 2: Applying finite dimensional Ishii's lemma.}
         By Ishii's Lemma in \cite[Theorem 3.2]{usersguide} applied to \eqref{eq:optfinite}, for $\alpha>0$,
        there exists a sequence indexed by $k$
        $$(t^*_k,s^*_k,m^*_k,n^*_k,h^*_k,j^*_k,X_k^*,Y_k^*)\in (\R)^2\times (\R^d)^2\times \R^2\times  (\S_{d})^2$$ 
        so that 
        \begin{align}\label{maxu2v2}
            u_2(t,m)-v_{2}(s,n)-h^*_k(t-t^*_k)-j^*_k(s-s^*_k)
            &-\frac{1}{2}(m-m^*_k)^\top X^*_k(m-m^*_k)
            \\ &    -\frac{1}{2}(n-n^*_k)^\top Y^*_k(n-n^*_k)\notag
        \end{align}
        admits a local maximum at $((t^*_k,m^*_k),(s^*_k,n^*_k))$, and we have the following bounds and limits as $k \to \infty$,
        \begin{align}\label{eq:tstarn}
            ((t^*_k,m^*_k),(s^*_k,n^*_k)) &\longrightarrow\left((t^*,m^*),(s^*,n^*)\right),\\
            (u_2(t^*_k,m^*_k),v_{2}(s^*_k,n^*_k))&\longrightarrow (u_2(t^*,m^*),v_{2}(s^*,n^*)),\notag\\
            (h^*_k,j^*_k,X^*_k,Y^*_k) &\longrightarrow\left(\frac{1}{\e}(t^*-s^*),-\frac{1}{\e}(t^*-s^*),X^*,Y^*\right),  \notag
        \end{align}
        as well as the inequality 
        \begin{align}
        \label{eq:orderXn}
            -\left(\frac{1}{\alpha}+\frac{2}{\e}\right)I_{2d}\leq 
        &\begin{pmatrix}
            X^*&0\\0& Y^*
        \end{pmatrix}
        \leq 
        \left(\frac{1}{\e}+\frac{2\alpha}{\e^2} \right)
        \begin{pmatrix}
            I_{d}&-I_{d}\\-I_{d}&I_{d}
        \end{pmatrix}.
        \end{align}

        \medskip
        \noindent{\it Step 3: Obtaining infinite-dimensional jets.} For large enough $k \in \mathbb N$, $u_2(t_k^*,m_k^*) \geq u_2(t^*,m^*)-1$, denote by $A$ the set of $\theta = (t,\mu,m)$ such that 
        \begin{align*}
            u_1(\theta)=\tilde u(\theta)-\frac{1}{2\e}\Phi(\mu) \geq u_2(t^*,m^*)-1.
        \end{align*}
        According to the definition of $\tilde u$, we have 
        \begin{align*}
            \delta \vartheta(\theta)+\frac{1}{\e} \rho_F^2(\mu,\mu^*) \leq 1- u_2(t^*,m^*)-\frac{1}{2\e} \mathcal{L}(\mu,\mu^*,\nu^*)+ \lVert u \rVert_{\infty}. 
        \end{align*}
        Then applying Cauchy-Schwarz inequality to $\mathcal{L}(\mu,\mu^*,\nu^*)$, we get that $|\mathcal{L}(\mu,\mu^*,\nu^*)| \leq \frac{C}{2\epsilon}\rho_F(\mu^*,\nu^*)$ with some positive constant $C>0$. Therefore for any $\theta \in A$,  
        \begin{align*}
        \delta \vartheta(\theta)+\frac{1}{\e} \rho_F^2(\mu,\mu^*) \leq 1- u_2(t^*, m^*)+ \frac{C}{2\e}\rho_F(\mu^*,\nu^*)+\lVert u \rVert_{\infty},
        \end{align*}
        and hence the closure of $A$ is compact.
        
        We prove that for every large enough $k$ there exists $\mu_k^*$ such that with  $\theta_k^*:=(t_k^*,\mu_k^*,m_k^*)$       
        $$
            u_2(t_k^*,m_k^*)=u_1(\theta_k^*).
        $$ 
        Indeed, fixing any  $(t_k^*,m^*_k)$, take a sequence $(t_k^n,m_k^n) \to (t_k^*, m^*_k)$ as $n \to \infty$ such that 
        \begin{align*}
            u_2(t_k^*,m_k^*)=\lim\limits_{n\to \infty} \sup_{\mu \in \mathcal{P}_2(\R^d)} u_1(t_k^n,\mu,m_k^n). 
        \end{align*}
        Due to the compactness of $A$ and the upper semi-continuity of $u_1$, for each $n \in \mathbb N$ there exists $\mu_k^n \in \mathcal{P}_2(\mathbb R^d)$ that 
        \begin{align*}
            \sup_{\mu \in \mathcal{P}_2(\R^d)} 
            u_1(t_k^n,\mu, m_k^n)
                =
            u_1(t_k^n,\mu^n_k, m_k^n).
        \end{align*}
        Again by the compactness of $A$, there exists a limiting point $\mu_k^*$ of $(\mu_k^n)_{n \in \mathbb N}$, and hence 
        \begin{align*}
             u_2(t_k^*, m_k^*) = \lim\limits_{n \to \infty}u_1(t_k^n, \mu^n_k, m_k^n) \leq u_1(t_k^*, \mu_k^*, m_k^*), 
        \end{align*}
        which is actually an equality due to the definition of $u_2$. For the same token, there exists $\nu_k^*$ such that with $\iota_k^*:=(s_k^*,\nu_k^*,n_k^*)$,
        $$v_2(s_k^*,n_k^*)=v_1(\iota_k^*).$$
        
        Then because $(\theta^*,\iota^*)$ is the strict global maximum of \eqref{eq:optu1v1} and the upper-semicontinuity of \eqref{eq:optu1v1}, it must hold that $(\theta_k^*,\iota_k^*) \to (\theta^*,\iota^*)$ as $k \to \infty$.  According to the inequality \eqref{maxu2v2} and the definitions \eqref{eq:defu2} and \eqref{eq:defv2}, 
        \begin{align*}
        &u_1(\theta)-v_1(\iota) -h^*_k(t-t^*_k)-j^*_k(s-s^*_k)
            -\frac{1}{2}(m-m^*_k)^\top X^*_k(m-m^*_k)
                -\frac{1}{2}(n-n^*_k)^\top Y^*_k(n-n^*_k)\notag
        \end{align*}
        obtains a local maximum at $(\theta_k^*,\iota_k^*)$. Denoting 
        \begin{align*}
            \Phi_k(\theta)=&
                \frac{1}{2\e}\Phi(\mu)+h^*_k(t-t^*_k)
                + \frac{1}{2}(m-m^*_k)^\top X^*_k(m-m^*_k), \\
            \Psi_k(\iota)=&
                \frac{1}{2\e}\Psi(\nu)+ j^*_k(s-s^*_k)
                +\frac{1}{2}(n-n^*_k)^\top Y^*_k(n-n^*_k), 
        \end{align*}
        due to the definition of $u_1,v_1$, $\tilde u(\theta)-\tilde v(\iota)-  \Phi_k(\theta)- \Psi_k(\iota)$ obtains a local maximum at $(\theta_k^*,\iota_k^*)$. Therefore, according to \eqref{eq:tstarn}, the limit of derivatives of $\Phi_k(\theta_k^*), \Psi_k(\iota_k^*)$ is in the closure of jets of $\tilde u, \tilde v$ respectively, namely, 
        \begin{align*}
        &  \lim\limits_{k \to \infty}(\pa_t,  D_{\mu}, D_{x\mu}, \pa_{mm}^2) \Phi_k( \theta_k^*)\\
        & = \left(\frac{1}{\e}(t^*-s^*), \frac{D_\mu \Phi(\mu^*)(\cdot)}{2\e},\frac{D_{x\mu} \Phi(\mu^*)(\cdot)}{2\e},X^* \right) \in \bar J^{2,+} \tilde u(\theta^*), \\
        & \lim\limits_{k \to \infty}(\pa_t,  D_{\mu}, D_{x\mu}, \pa_{mm}^2) \Psi_k( \iota_k^*) \\
        & = \left(\frac{1}{\e}(t^*-s^*), -\frac{D_\mu \Psi(\nu^*)(\cdot)}{2\e},-\frac{D_{x\mu} \Psi(\nu^*)(\cdot)}{2\e},-Y^* \right) \in \bar J^{2,-} \tilde v (\iota^*). 
        \end{align*}
   \end{proof}

    Here is the assumption on $G$ for the comparison principle:
        \begin{assumption}\label{ass:comparison}
     \begin{itemize}
       \item[(i)] Assume that the extended Hamiltonian $G^e$ satisfies that for any $\theta = (t,\mu,m)$, $p_1,p_2 \in B_l^{d}$ and $q_1,q_2 \in B_l^{d \times d}$
        \begin{align*}
            |G^e(\theta,p_1,q_1,X_1)
            - G^e(\theta,p_2,q_2,X_2)|
            \le 
            L_G
            &\bigg(1 + |m| +
            \int_{\R^d}|x|\mu(dx)\bigg)
            \\&
            \Big(|p_1 - p_2|_l + |q_1 - q_2|_l  + |X_1 - X_2| \Big),
        \end{align*}
        with some positive constant $L_G$.
      
        \item[(ii)]
        There exists a modulus of continuity $\omega_G$ such that for all
        $\e > 0$, 
        $(\theta = (t,\mu, m),\iota = (s,\nu,n)) \in \big([0,T) \x \Pc_2(\R^d) \x \R^d\big)^2$, we have the inequality
        \begin{align*}
            &~
            G^e\Big(\theta, 
                \nabla\kappa,
                \nabla^2\kappa,
                X
            \Big)
            -
            G^e\Big(\iota, 
                \nabla\kappa,
                \nabla^2\kappa,
                -Y
            \Big) 
            \\ \le & ~
            \omega_G\bigg(\frac{1}{\e}d^2_F(\theta,\iota) + d_F(\theta,\iota)\bigg)  
            \bigg(1 + |m| + |n| + \int_{\R^d}|x|(\mu + \nu)(dx)\bigg),
        \end{align*}
        where the function $\kappa$ on $\R^d$ is defined as
        \begin{equation*}
            \kappa(x)
            ~ := ~
            \frac{1}{\e}\int_{\R^d}
            \frac{Re(F_k(\mu - \nu)f^*_k(x))}
            {(1 + |k|^2)^\lambda}
            dk,
           \quad
            x \in \R^d,
        \end{equation*}
        whenever $X \leq -Y$, $X,Y \in \S_d $.
        \end{itemize}
    \end{assumption}

    \begin{remark}
   We relax Assumption 3.1 in \cite{BaEkZh23} by replacing the term $\omega_G(d_F(\theta,\iota))$ in \cite[Assumption 3.1(ii)]{BaEkZh23} with $\omega_G\left(\frac{1}{\varepsilon} d^2_F(\theta,\iota) + d_F(\theta,\iota)\right)$. % However, since we do not require the Lipschitz continuity of the semi-solutions, unlike \cite[Assumption 3.1(ii)]{BaEkZh23}, we do not assume $d_F(\theta,\iota)\leq C \e$. Thus, this assumption is indeed weaker than \cite[Assumption 3.1(ii)]{BaEkZh23}.
   
    \end{remark}
    \begin{theorem}[Comparison Principle]\label{thm:comparison}
            Assume that $u, -v: [0,T] \times \Pc_2({\R^d})  \rightarrow \R$ are bounded upper-semicontinuous functions, and $u$ (resp. $v$) is a viscosity subsolution (resp. supersolution) of the equation
    \begin{align}
        -\pa_t u(t,\mu)
        =
        G(t,\mu, D_\mu u (t,\mu),D_{x\mu}u(t\,\mu),\Hc u(t,\mu)).
    \end{align}
        Then, under Assumption \ref{ass:comparison}, $u(T,\cdot) \le v(T,\cdot)$ implies that $u \le v$ for all $(t,\mu) \in [0,T] \x \Pc_2({\R^d})$.
    \end{theorem}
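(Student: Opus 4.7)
The plan is a lifted doubling-of-variables argument combined with a strict subsolution reduction, arguing by contradiction. Suppose $M := \sup_{(t,\mu)}(u-v)(t,\mu) > 0$. I would first replace $u$ by $u^\eta(t,\mu) := u(t,\mu) - \eta(T-t)$ for $\eta > 0$ small; this is a strict viscosity subsolution (the jet inequality becomes $-b - G^e \le -\eta$), the terminal inequality $u^\eta(T,\cdot)\le v(T,\cdot)$ still holds, and proving $u^\eta \le v$ for every $\eta > 0$ implies $u \le v$. Lift to the extended space via $\bar u^\eta(t,\mu,m) := u^\eta(t,(I_d+m)_{\#}\mu)$ and $\bar v(t,\mu,m) := v(t,(I_d+m)_{\#}\mu)$, and form the penalized functions $\tilde u := \bar u^\eta - \delta\vartheta$ and $\tilde v := \bar v + \delta\vartheta$. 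A standard approximation of the supremum on sets of uniformly bounded second moment shows $\sup(\tilde u - \tilde v) > 0$ for $\delta, \eta$ sufficiently small.

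Apply Lemma~\ref{lemm:ishii} to obtain a maximizer $(\theta^*,\iota^*)$ of \eqref{eq:ishiimax} together with jets. Boundedness of $u,v$ forces $\frac{1}{2\e}d_F^2(\theta^*,\iota^*) \to 0$ as $\e \to 0$; combined with $u(T,\cdot) \le v(T,\cdot)$ and semicontinuity, this ensures $t^*, s^* < T$ for small $\e$, placing us in the interior. A direct linear functional derivative computation on $\Phi$ and $\Psi$ in \eqref{eq:defpsi} (using $\delta_\mu F_k(\mu)(x) = f_k(x)$) gives
\begin{align*}
\frac{D_\mu\Phi(\mu^*)(x)}{2\e} = \nabla\kappa(x),\qquad -\frac{D_\mu\Psi(\nu^*)(x)}{2\e} = \nabla\kappa(x),
\end{align*}
and analogously $D_{x\mu}\Phi(\mu^*)/(2\e) = -D_{x\mu}\Psi(\nu^*)/(2\e) = \nabla^2\kappa$, with $\kappa$ the function from Assumption~\ref{ass:comparison}(ii). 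Since $\pa_t\vartheta = 0$, $D_\mu\vartheta(x) = 2x$, $D_{x\mu}\vartheta = 2I_d$ and $\pa^2_{mm}\vartheta = 2I_d$, the jets of $\bar u^\eta,\bar v$ are those of $\tilde u,\tilde v$ provided by Lemma~\ref{lemm:ishii} shifted by $O(\delta)$ in $(p,q,X)$ only.

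Inserting these jets into the strict subsolution inequality for $u^\eta$ and the supersolution inequality for $v$ and subtracting (the time derivatives $b_1 = b_2 = (t^*-s^*)/\e$ cancel) yields
\begin{align*}
\eta \le G^e(\theta^*, p^+, q^+, X^* + 2\delta I_d) - G^e(\iota^*, p^-, q^-, -Y^* - 2\delta I_d),
\end{align*}
where $p^\pm(x) := \nabla\kappa(x) \pm 2\delta x$ and $q^\pm(x) := \nabla^2\kappa(x) \pm 2\delta I_d$. Splitting via the unperturbed generators, Assumption~\ref{ass:comparison}(i) absorbs the $O(\delta)$ perturbations in $(p,q,X)$, while Assumption~\ref{ass:comparison}(ii) -- applicable since $X^* \le -Y^*$ follows by testing the Ishii matrix inequality against $(v,v)^\top$ -- bounds the remaining unperturbed difference by $\omega_G\bigl(\e^{-1}d_F^2(\theta^*,\iota^*) + d_F(\theta^*,\iota^*)\bigr)\bigl(1 + |m^*| + |n^*| + \int|x|(\mu^*+\nu^*)(dx)\bigr)$. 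The main obstacle is the growth prefactor $1 + |m^*| + \int|x|(\mu^* + \nu^*)(dx)$, which the penalty bound $\delta(\vartheta(\theta^*) + \vartheta(\iota^*)) \le C$ controls only up to $O(\delta^{-1/2})$; I would handle this by sending $\e \to 0$ first so that $\omega_G \to 0$ beats the prefactor on each fixed-$\delta$ sublevel set, and then $\delta \to 0$ so that the contributions from (i), scaling as $\delta \cdot \delta^{-1/2} = \sqrt\delta$, also vanish, yielding $\eta \le 0$, contradicting $\eta > 0$.
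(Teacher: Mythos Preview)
Your proposal is correct and follows essentially the same route as the paper: strict-subsolution reduction, lifting to the extended variable $m$, moment penalization by $\delta\vartheta$, doubling with the Fourier--Wasserstein distance, invoking Lemma~\ref{lemm:ishii} to produce the jets $(\nabla\kappa,\nabla^2\kappa,X^*)$ and $(\nabla\kappa,\nabla^2\kappa,-Y^*)$, then combining Assumption~\ref{ass:comparison}(i)--(ii) and sending $\e\to 0$ before $\delta\to 0$ with the $\sqrt\delta$ scaling. The only cosmetic differences are your penalty $\eta(T-t)$ versus the paper's $h(T-t+1)$, and that the paper proves $\tfrac{1}{2\e}d_F^2(\theta^*,\iota^*)\to 0$ via the standard diagonal comparison $H^\delta_\e(\theta^\delta_\e,\iota^\delta_\e)\ge \sup_\theta H^\delta_\e(\theta,\theta)$ rather than appealing to boundedness alone (boundedness by itself only gives $d_F^2\to 0$; the full statement needs that extra step, which you should spell out).
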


    \begin{proof}

   %  Indeed, suppose $\bar \psi: [0,T] \times \R^d \times \Pc_2(\R^d) \to \R$ is a regular test function such that $\bar u - \bar \psi$ obtains a local maximum at an interior point $(t_0,m_0,\mu_0)$. Define $\psi(t,\mu):= \bar\psi(t, m_0, (id-m_0)_{\#} \mu)$, and hence 
   %  \begin{align*}
   %      u(t,\mu)- \psi(t,\mu)=& \bar u(t,m_0, (id-m_0)_{\#}\mu)- \bar \psi (t, m_0, (id-m_0)_{\#} \mu) \\
   %      \leq & \bar u(t_0,m_0,\mu_0)-\bar\psi(t_0,m_0,\mu_0) \\
   %      =& u(t_0,(id+m_0)_{\#}\mu_0)-\psi(t_0, (id+m_0)_{\#}\mu_0),
   %  \end{align*}
   % that is $u-\psi$ obtains a local maximum at $(t_0, (id+m_0)_{\#}\mu_0)$. Invoking the subsolution property of $u$ and denoting $\hat \mu_0=(id+m_0)_{\#}\mu_0$, we get that 
   % \begin{align*}
   %     -\pa_t \psi(t_0, \hat \mu_0) \leq G(t_0,\hat \mu_0 , D_{\mu} \psi(t_0,\hat \mu_0), D_{x \mu}\psi(t_0,\hat \mu_0), \mathcal{H}\psi(t_0,\hat \mu_0)). 
   % \end{align*}
   % By direct computation, 
   % \begin{align*}
   % D_{\mu} \psi(t_0,\hat \mu_0)(x)& = D_{\mu}\bar\psi(t_0,m_0,\mu_0)(x-m_0) \\
   % D_{x\mu} \psi(t_0,\hat \mu_0)(x)& = D_{x\mu}\bar\psi(t_0,m_0,\mu_0)(x-m_0) \\
   % \mathcal{H}\psi(t_0,\hat \mu_0)&= \nabla^2_{m}\bar\psi(t_0,m_0,\mu_0) \end{align*}

\medskip
 \noindent  \emph{Step 1: Preliminary.} 
 Recall that  $\ub(t,\mu,m) := u(t,(I_d+m)_\sharp\mu)$, $\vb(t,\mu,m) := v(t,(I_d+m)_\sharp\mu)$. We prove the result by contradiction. Assume that 
        \begin{equation*}
            \sup_{(t,\mu,m) \in [0,T] \x \Pc_2({\R^d}) \x \R^d}(\ub - \vb)(t,\mu,m) \geq 4r,
        \end{equation*}
        where $r$ is a strictly positive constant.
        Then we choose a $h > 0$ small enough depending on $r$ such that 
        $\bar u_h:= \bar u - h(T - t + 1)$ satisfies 
        \begin{equation*}
            \sup_{(t,\mu,m) \in [0,T] \x \Pc_2({\R^d}) \x \R^d}(\bar u_h - \bar v)(t,\mu,m) \ge 3r > 0.
        \end{equation*}
        We then verify that $\bar u_h$ is still a viscosity subsolution of the equation \eqref{eq:NewHJB}. Suppose $\phi$ is a partial $C^2$-regular test function such that $\ub_h - \phi$ has a local maximum at $(t^*,\mu^*,m^*)$. Equivalently, $\ub - (\phi + h(T - t + 1))$ has a local maximum at $(t^*,\mu^*,m^*)$. 
        We  write $\phi + h(T - t + 1)$ as $\phi_h$ for short.
        Then, the viscosity property of $u$ gives that, for any $(t^*,\mu^*,m^*)$,
        \begin{align*}
            &
            -\left(\pa_t \phi_h
                +
                G^e(\cdot, D_\mu \phi_h,D_{x\mu} \phi_h, \pa^2_{mm} \phi_h)\right)
            (t^*, \mu^*, m^*)
            \\  ~ = &~
            -\left(\pa_t \phi_h
                +
                G^e(\cdot, D_\mu \phi,D_{x\mu} \phi, \pa^2_{mm} \phi)\right)(t^*, \mu^*, m^*)
            \\ ~ = & ~
             -\left(\pa_t \phi
                +
                G^e(\cdot, D_\mu \phi,D_{x\mu} \phi, \pa^2_{mm} \phi)\right)(t^*, \mu^*, m^*)
                - h
            ~ \le ~ - h 
            ~ <   ~ 0.
        \end{align*} 

        \medskip
                
       \noindent \emph{Step 2: Doubling the variables.}
        Recall in \eqref{eq:vartheta} that for each $\theta = (t, \mu, m) \in [0,T] \times \Pc_2({\R^d}) \x \R^d $, recall that the function
        $\vartheta(\theta)=1+|m|^2+ \int_{{\R^d}}|x|^2\mu(dx)$, and  $\vartheta$ is lower-semicontinuous. 
        
        Now for any $\e $, $\delta$ > 0, and $(\theta, \iota) \in \left( [0,T] \x \Pc_2({\R^d}) \x \R^d  \right)^2 $, we define
        \begin{equation*}
            H_\e^\delta(\theta, \iota) 
            ~ := ~
            \ub_h(\theta) - \vb(\iota) - \frac{1}{2\e}d^2_F(\theta,\iota) - \delta\big(\vartheta(\theta) + \vartheta(\iota)\big),
        \end{equation*}
        and claim that the supremum of $H^\delta_\e$ over all admissible $(\theta,\iota)$ can be obtained.
        
        In fact, there exists an element $\theta_0 \in [0,T]  \x \Pc_2({\R^d}) \x \R^d$, such that
        \begin{equation*}
            H_\e^\delta(\theta_0,\theta_0) + 2\delta\vartheta(\theta_0)
            ~ = ~
            \ub_h(\theta_0) - \vb(\theta_0)
            ~ \ge ~ 2r > 0,
        \end{equation*}
        and for $\delta < \frac{r}{2\vartheta(\theta_0) + 1}$. So it holds that
        \begin{align*}
            \sup_{(\theta, \iota) \in ([0,T]   \x  \Pc_2({\R^d})\x \R^d)^2}
            H_\e^\delta(\theta, \iota)
            ~ \ge ~ 
            H_\e^\delta(\theta_0,\theta_0)
            ~ \ge ~
            r > 0.
        \end{align*}
        Take a sequence $\{(\theta_k,\iota_k)\}_{k \in \N_+}$ with $H_\e^\delta(\theta_k,\iota_k) > 0$ such that
        \begin{equation*}
            \lim_{k \to \infty}H_\e^\delta(\theta_k,\iota_k) 
            ~ = ~
            \sup_{(\theta, \iota) \in ([0,T]   \x  \Pc_2({\R^d})\x \R^d)^2}
            H_\e^\delta(\theta, \iota).
        \end{equation*}
        We have the estimation that for each $k \in \N_+$
        \begin{align*}
            \delta\big(\vartheta(\theta_k) + \vartheta(\iota_k)\big)
            &~ \le ~
            \ub_h(\theta_k) - \vb(\iota_k) - \frac{1}{2\e}d^2_F(\theta_k,\iota_k) \\
            &~ \le ~
            \ub_h(\theta_k) - \vb(\iota_k)
            ~ \le ~
            2M,
        \end{align*}
        where $M$ is a common upper bound for $u$ and $-v$.
        Since, for any $c > 0$, the sublevel set $\{\theta \in [0,T]  \x \Pc_2({\R^d})  \x \R^d: \vartheta(\theta) \le c\}$ is compact,  
        without loss of generality we can assume that $\{(\theta_k,\iota_k)\}_{k \in \N_+}$ is
        a convergent sequence in $[0,T] \x \Pc_2({\R^d}) \x \R^d$, and 
        $$
            \lim_{k \to \infty}
            (\theta_k, \iota_k)
            ~ = ~
            (\theta^\delta_\e,\iota^\delta_\e),
        $$
        for some $(\theta^\delta_\e,\iota^\delta_\e) \in ([0,T] \x \Pc_2({\R^d}) \x \R^d)^2$.
        As a consequence, the upper-semicontinuity of $H^\delta_\e $ yields that
        \begin{align*}
            &
            H_\e^\delta(\theta^\delta_\e,\iota^\delta_\e)
            ~ = ~
            \sup_{(\theta, \iota) \in ([0,T]  \x  \Pc_2({\R^d}) \x \R^d)^2}
            H_\e^\delta(\theta, \iota)
            ~>~ 0,
            \\~
            &
            \delta\big(\vartheta(\theta^\delta_\e) 
             + \vartheta\big(\iota^\delta_\e)\big)
             ~ \le ~
             \varliminf_{k \to \infty}
                \delta\big(\vartheta(\theta_k) 
                + \vartheta\big(\iota_k)\big)
             ~ \le ~
             2M.
        \end{align*}
        Again, without loss of generality we can assume that $\{(\theta^\delta_\e,\iota^\delta_\e)\}_{\e \in \N_+}$ is
        a convergent sequence in $[0,T] \x \Pc_2({\R^d}) \x \R^d$, and 
        $$
            \lim_{\e \to 0+}
            (\theta^\delta_\e,\iota^\delta_\e)
            ~ = ~
            (\theta^\delta,\iota^\delta),
        $$
        for some $(\theta^\delta,\iota^\delta) \in ([0,T] \x \Pc_2({\R^d}) \x \R^d)^2$.

        Then, we claim that 
        \begin{equation}\label{eq:penalGoZero}
            \lim_{\e \to 0}
            \frac{1}{2\e}d^2_F(\theta^\delta_\e,\iota^\delta_\e)
            ~ = ~ 0.
        \end{equation}
        In fact, one may observe that
        \begin{align*}
             \varlimsup_{\e \to 0+}
             \frac{1}{2\e}d^2_F(\theta^\delta_\e,\iota^\delta_\e)
             & ~ =  ~
             \varlimsup_{\e \to 0+}
             \Big(- H_\e^\delta(\theta^\delta_\e,\iota^\delta_\e)
             - \delta\big(\vartheta(\theta^\delta_\e) 
                + \vartheta\big(\iota^\delta_\e)\big)
             + \bar u_h(\theta^\delta_\e) - \bar v(\iota^\delta_\e)\Big)
             \\ & ~ \le ~ 
              \varlimsup_{\e \to 0+}
              \Big(\ub_h(\theta^\delta_\e) - \vb(\iota^\delta_\e)\Big)
             ~ \le ~
             2M,
        \end{align*}
        and obtain that
        \begin{equation*}
            \varlimsup_{\e \to 0+} 
            d^2_F(\theta^\delta_\e,\iota^\delta_\e)
            ~ = ~
            0,
            ~\mbox{i.e.}~
            d^2_F(\theta^\delta,\iota^\delta)
            ~ = ~ 0,
            ~\mbox{or}~
            \theta^\delta 
            ~ = ~
            \iota^\delta.
        \end{equation*}
        Then it follows that
        \begin{align*}
            \varlimsup_{\e \to 0+}
             \frac{1}{2\e}d^2_F(\theta^\delta_\e,\iota^\delta_\e)
             & ~ =  ~
             \varlimsup_{\e \to 0+}
             \Big(- H_\e^\delta(\theta^\delta_\e,\iota^\delta_\e)
             - \delta\big(\vartheta(\theta^\delta_\e) 
                + \vartheta\big(\iota^\delta_\e)\big)
             + \ub_h(\theta^\delta_\e) - \vb(\iota^\delta_\e)\Big)
             \\ & ~ \le ~ 
             \varlimsup_{\e \to 0+}
             \Big(- \sup_{\theta \in [0,T] \x \Pc_2(\R^d) \x \R^d}H_\e^\delta(\theta,\theta)
             - \delta\big(\vartheta(\theta^\delta_\e) 
                + \vartheta\big(\iota^\delta_\e)\big)
             + \ub_h(\theta^\delta_\e) - \vb(\iota^\delta_\e)\Big)
             \\ & ~ \le ~ 
             - \sup_{\theta \in [0,T] \x \Pc_2(\R^d) \x \R^d}H_\e^\delta(\theta,\theta)
             - \delta\big(\vartheta(\theta^\delta) 
                + \vartheta\big(\iota^\delta)\big)
             + \ub_h(\theta^\delta) - \vb(\iota^\delta)
             \\ & ~ \le ~ 
             - \sup_{\theta \in [0,T] \x \Pc_2(\R^d) \x \R^d}H_\e^\delta(\theta,\theta)
             +  H_\e^\delta(\theta^\delta,\theta^\delta)
             ~ \le ~ 0,
        \end{align*}
        where the second inequality follows from the fact that $\sup_{\theta \in [0,T] \x \Pc_2(\R^d) \x \R^d}H_\e^\delta(\theta,\theta)$ is independent of $\e $, and
        that $\ub_h$,$-\vb$, and $-\vartheta$ are upper semi-continuous. 
        
        Moreover, it is clear that $t^\delta \neq T$, where $\theta^\delta = (t^\delta,\mu^\delta, m^\delta)$.
        Otherwise, it follows that
        \begin{equation*}
            0 < r \le
            \varlimsup_{\e \to 0+}
            H^\delta_\e (\theta^\delta_\e,\iota^\delta_\e)
            ~ \le ~
            \varlimsup_{\e \to 0+}
            \ub_h(\theta^\delta_\e) - \vb(\iota^\delta_\e)
            ~ \le ~
            \ub_h(\theta^\delta) - \vb(\iota^\delta)
            ~ \le ~ 
            0.
        \end{equation*}
\medskip
        \noindent \emph{Step 3: Contradiction by viscosity property and estimation.} Define 
        $
             \ut_h := \ub_h - \delta\vartheta,
             ~
             \vt := \vb + \delta\vartheta.
        $ 
        Without loss of generality, we assume that 
        $(\theta^\delta_{\e},\iota^\delta_{\e})$
        is a strict global maximum of 
        $$
            (\theta,\iota) \longmapsto H^\delta_\e (\theta,\iota) 
        = \ut_h(\theta) - \vt(\iota) - \frac{1}{2\e}d^2_F(\theta,\iota).
        $$
        Then by Ishii's lemma \ref{lemm:ishii}, it follows that 
        for any $\e>0$, there exist $X^*,Y^* \in \Sc_{d}$ such that 
        \begin{align*}
         & \left(\frac{1}{\e}(t^*-s^*), 
            \frac{D_\mu \Phi(\mu^*)(\cdot)}{2\e},
            \frac{D_{x\mu} \Phi(\mu^*)(\cdot)}{2\e},X^* \right) 
            \in \bar J^{2,+} \tilde u_h(\theta^*), \\
        & \left(\frac{1}{\e}(t^*-s^*), 
            -\frac{D_\mu \Psi(\nu^*)(\cdot)}{2\e},
            -\frac{D_{x\mu} \Psi(\nu^*)(\cdot)}{2\e},-Y^* \right) 
            \in \bar J^{2,-} \tilde v (\iota^*),
            \end{align*}
            as well as 
        \begin{align*}
            -\left(\frac{1}{\alpha}+\frac{2}{\e}\right)I_{2d}\leq 
            &\begin{pmatrix}
                X^*&0\\0& Y^*
            \end{pmatrix}
            \leq 
            \left(\frac{1}{\e}+\frac{2\alpha}{\e^2} \right)
            \begin{pmatrix}
                I_{d}&-I_{d}\\-I_{d}&I_{d}
            \end{pmatrix}.
        \end{align*}
        For a better statement, let us recall and define some notations: 
        \begin{align*}
            &
            \theta^\delta_\e 
            ~ = ~
            (t^\delta_\e,\mu^\delta_\e,m^\delta_\e),
            ~
            \iota^\delta_\e 
            ~ = ~
            (s^\delta_\e,\nu^\delta_\e,n^\delta_\e), \\ &
            q(x) := |x|^2, \quad \quad
            x \in \R^d, 
            \\ &
            \kappa^\delta_\e (x)
            ~ := ~
            \frac{1}{\e}\int_{\R^d}
            \frac{Re(F_k(\mu^\delta_\e - \nu^\delta_\e)f^*_k(x))}
            {(1 + |k|^2)^\lambda}
            dk,
             ~
            x \in \R^d.
        \end{align*}
       Direct calculation yields that for $\theta = (t,\mu, m) \in [0,T] \x \Pc_2(\R^d)  \x \R^d$,
        \begin{align*}
            &
            D_\mu\vartheta(\theta,x)
            ~ = ~
             \nabla q(x),
            \quad
            D_{x\mu}\vartheta(\theta,x)
            ~ = ~
            \nabla^2 q(x),
            \quad
            \nabla_m^2\vartheta(\theta)
            ~ = ~
            \nabla^2 q(m),
            \\
            &
            \frac{D_\mu \Phi(\mu^*)(\cdot)}{2\e}
            ~ = ~
            -\frac{D_\mu \Psi(\nu^*)(\cdot)}{2\e}
            ~ = ~
            \nabla\kappa^\delta_\e(\cdot),
            \quad
            \frac{D_{x\mu} \Phi(\mu^*)(\cdot)}{2\e}
            ~ = ~
            -\frac{D_{x\mu} \Psi(\nu^*)(\cdot)}{2\e}
            ~ = ~
            \nabla^2\kappa^\delta_\e(\cdot).
        \end{align*}
        For convenience, we denote by
        \begin{align*}
            &\alpha^\delta_\e 
            ~:= ~
            \bigg(
                \nabla\kappa^\delta_\e + \delta \nabla q,
                \nabla^2\kappa^\delta_\e + \delta \nabla^2 q,
               X^* + \delta \nabla^2 q(m^\delta_\e)
            \bigg),
            \\~&
            \beta^\delta_\e 
            ~:= ~
            \bigg(
                \nabla\kappa^\delta_\e - \delta \nabla q,
                \nabla^2\kappa^\delta_\e - \delta \nabla^2 q,
               -Y^* - \delta \nabla^2 q(n^\delta_\e)
            \bigg),
        \end{align*}
        Then,  the mapping
        $ \theta \longmapsto (D_{\mu}\vartheta, D_{x \mu} \vartheta, \nabla^2_m \vartheta)(\theta) $ is continuous, and it follows from the linearity and the definition of the closure of the jets that
        \begin{align*}
            &
            \bigg(
                \frac{1}{\e}(t^\delta_\e - s^\delta_\e),
                \alpha^\delta_\e 
            \bigg)
            \in
            \Jb^{2,+}\ub_h(\theta^\delta_\e),
            ~
            \bigg(
                \frac{1}{\e}(t^\delta_\e - s^\delta_\e),
                \beta^\delta_\e 
            \bigg)
            \in
            \Jb^{2,-}\vb(\theta^\delta_\e),
        \end{align*}
        By viscosity property of $\bar u_h$, and $\bar v$, one has that
        \begin{align*}
            -
            \frac{1}{\e}(t^\delta_\e - s^\delta_\e)
            -  
            G^e\Big(\theta^\delta_\e, \alpha^\delta_\e 
            \Big) 
            ~ \le 
            -h,
            \quad
            -
            \frac{1}{\e}(t^\delta_\e - s^\delta_\e)
            -  
            G^e\Big(\iota^\delta_\e, \beta^\delta_\e 
            \Big) 
            \ge 
            0,
        \end{align*}
        i.e.,
        \begin{align*}
            h \le G^e\Big(\theta^\delta_\e, 
                \alpha^\delta_\e 
            \Big)
            -
            G^e\Big(\iota^\delta_\e, 
                \beta^\delta_\e 
            \Big),
        \end{align*}
        and on the other hand, one can obtain the following estimation
        \begin{align*}
            &
            G^e\Big(\theta^\delta_\e, 
                \alpha^\delta_\e 
            \Big)
            -
            G^e\Big(\iota^\delta_\e, 
                \beta^\delta_\e 
            \Big) 
            \\ \le ~ &
            G^e\Big(\theta^\delta_\e, 
                \nabla\kappa^\delta_\e,
                \nabla^2\kappa^\delta_\e,
                X^*
            \Big)
            -
            G^e\Big(\iota^\delta_\e, 
                \nabla\kappa^\delta_\e,
                \nabla^2\kappa^\delta_\e,
                -Y^*
            \Big) 
                \\ & +
            L_G\bigg(2 + |m^\delta_\e| + |n^\delta_\e| + \int_{\R^d}|x|(\mu^\delta_\e + \nu^\delta_\e)(dx)
            \bigg)
            (4d + 2\sqrt{d})\delta 
            \\ \le ~ &
            \Bigg(
            \omega_G\bigg(\frac{1}{\e}d^2_F(\theta^\delta_\e,\iota^\delta_\e) + d_F(\theta^\delta_\e,\iota^\delta_\e)\bigg)
            +
            L_G(4d + 2\sqrt{d})\delta
            \Bigg)
            \bigg(2 + |m^\delta_\e | + |n^\delta_\e | + \int_{\R^d}|x|(\mu^\delta_\e + \nu^\delta_\e)(dx)
            \bigg),
        \end{align*}
        where the first inequality holds true by Assumption \ref{ass:comparison} (i), and the second one by Assumption \ref{ass:comparison} (ii).
        Letting $\e $ go to $0$, one obtains
        \begin{align*}
             h 
            ~ \le ~
            4(2d + \sqrt{d})L_G
            \bigg(1 + |m^\delta| + \int_{\R^d}|x|\mu^\delta(dx)
            \bigg) \delta .
        \end{align*}
        Finally, letting $\delta$ go to $0$, one gets the desired contradiction. 
    \end{proof}
\section{Applications}\label{sec:appli}
\subsection{An application to controlled stochastic filtering problems}\label{sec:appli-1}
As an application, we show that the value function of a stochastic control problem with partial observation is the unique viscosity solution of a parabolic equation. The setting is almost the same as in \cite{BaEkZh23}, but we prove the uniqueness under much weaker assumptions. 

Suppose that we have two independent Brownian motions $B,W$ of dimensions $d_1$ and $d_2$ on a  filtered probability space $(\Omega, \mathcal{F},\mathbb P)$, and a compact closed control set $A \subset \R^d$. Take the coefficients $b:\R^d \x \Pc(\R^d)\times A \to \R^d$, $\sigma:\R^d\x \Pc(\R^d) \times A \to \R^{d\times d_1}$, and $\tilde\sigma:A \to \R^{d \times d_2}$. 
The space of probability measure $\Pc(\R^d)$ is equipped with the Fourier-Wasserstein distance $\rho_F$.
We consider the following McKean-Vlasov stochastic differential equations
\begin{align*}
dX^{t,\mu,\alpha}_s&=b(X^{t,\mu,\alpha}_s,m^{t,\mu,\alpha}_s,\alpha_s) \, ds + \sigma(X^{t,\mu,\alpha}_s,m^{t,\mu,\alpha}_s,\alpha_s) \, dB_s + \tilde \sigma(\alpha_s) \, dW_s, \quad \text{$t \leq s \leq T$}, \\
X_t^{t,\mu,\alpha}&=\xi, 
~
m^{t,\mu,\alpha}_s := \Lc(X^{t,\mu,\alpha}_s \, |\,\Fc^W_s),
\end{align*}
where $\xi$ is independent of $B,W$ with distribution $\mu \in \Pc_2(\R^d)$  and $\alpha: \Omega \x [0,T] \rightarrow A$ is an admissible control adapted only to the filtration generated by $W$. Since $\xi$ is independent of $B,W$, it can be easily checked that the distribution of $(X_s^{t,\mu,\alpha},\alpha_s)$ is independent of the choice of $\xi$.

% Let $m_t$ denote the conditional law  $\mathcal{L}(X_t \,|\, \mathcal{F}^{W}_t)$. 
Then $m_t$, the conditional law of state $X$ at time $t$, satisfies the equation
\begin{align}\label{eq:conditionlaw}
    d \langle m^{t,\mu,\alpha}_s, f \rangle &=  \langle m^{t,\mu,\a}_s, L^{\alpha_s} f \rangle \,ds + \langle m^{t,\mu,\a}_s, M^{\a_s} f \rangle \, dW_s, \quad \text{ $t \leq s \leq T$}  \\
    m^{t,\mu,\alpha}_t &= \mu, \notag
\end{align}
where $f:\R^d \x \Pc(\R^d) \longrightarrow \R$ is any $C^2$ test function and 
\begin{align*}
    L^a f(\cdot):=&  b(\cdot,a)^\top \pa_xf(\cdot) +\frac{1}{2} Tr \left(  (\sigma\sigma^{\top}(\cdot,a)  +\tilde\sigma \tilde \sigma^{\top}(a)) \pa_x^2  f(\cdot)  \right),\\
    M^a f(\cdot):=&   \tilde\sigma(a)^\top \pa_xf(\cdot). 
\end{align*}

Take $\mathcal{A}:=\{\alpha=(\alpha_s)_{0\leq s \leq T}: \alpha : \Om \x [0,T] \longrightarrow A \text{ is measurable and $\F^W$ adapted} \}$ to be the set of all admissible controls. Given a running cost $r:\R^d \times A \to \R$, and a terminal cost $l: \R^d \to \R$, we define the cost of control $\alpha \in \mathcal{A}$,
\begin{align*}
    J(t,\mu, \alpha):=\E \left[\int_t^T r(X^{t,\mu,\alpha}_s, m^{t,\mu,\alpha}_s, \alpha_s) \, ds + l(X^{t,\mu,\alpha}_T)  \right].
\end{align*}
We aim at solving the following optimization problem 
\begin{align*}
    v(t,\mu)=\inf_{\alpha \in \mathcal{A}} J(t,\mu,\alpha). 
\end{align*}

\begin{assumption}\label{assumption1}
\begin{itemize}
    \item[(i)] The functions $b,\sigma,\tilde\sigma, r,l$ are bounded and Lipschitz continuous in their domains. 
    \item[(ii)] $\sigma$ is uniform elliptic, i.e. there exists some positive constant $\delta$ such that $ |\sigma(x,\mu,a) \xi |^2  \geq \delta |\xi|^2$ for any $x,a,\xi,\mu$. 
\item[(iii)] For any $a \in A$, $\mu \in \Pc(\R^d)$, $ x\mapsto \sigma^\top(x,\mu,a)\sigma(x,\mu,a) \in H^{\lambda+d/2+1}(\R^d)$,
    $ x\mapsto b(x,\mu,a) \in H^{\lambda}(\R^d)$,
    $ x\mapsto r(x,\mu,a) \in H^{\lambda}(\R^d)$,
where $H^s(\R^d)$ with $s > 0$ is a fractional Sobolev space (see Definition \ref{def:Sobolev} for details), and  $\lambda$ is given by   
    \begin{equation} \label{eq:defn-lambda}
        \lambda = 
        \left\{
        \begin{split}
        &\lfloor\frac{d}{2}\rfloor + 4,
        ~ \mbox{for}~ d = 4k, 4k + 1,
        \\
        &\lfloor\frac{d}{2}\rfloor + 3,
        ~ \mbox{for}~ d = 4k + 2, 4k + 3.
        \end{split}
    \right.
    \end{equation}
\end{itemize}
\end{assumption}

Let us define for $(a,\mu,p,q,M) \in A  \times \Pc_2(\R^d) \times B^d_{l} \times B^{d \times d}_{l} \times \mathbb{S}^d $
\begin{align*}
    K(a,\mu,p,q,M):=& \int  r(x,\mu,a)+ b(x,\mu,a)^\top p(x) +\frac{1}{2} Tr\left(q(x) \sigma \sigma^\top (x,\mu,a) \right)\, \mu(dx) \\
    &+\frac{1}{2} Tr( \tilde \sigma \tilde \sigma^\top(a) M).
\end{align*}
\begin{remark}\label{rmk:filter}
    $\mathrm{(i)}$
    Compared to Assumption 4.1 in \cite{BaEkZh23}, which requires exponential decay of $r$ and $l$'s derivatives, the above assumption is much weaker in $r$ and $l$, but requires uniform ellipticity in $\sigma$.
    
    \noindent $\mathrm{(ii)}$
    The above McKean-Vlasov dynamics under given assumption has a unique strong solution, we may refer to Theorem A.3 in \cite{10.1214/21-AOP1548}. 
\end{remark}

The proof of the following theorem is postponed to Subsection \ref{Proof:thm:viscosity}.
\begin{theorem}[Viscosity solution]\label{thm:viscosity_property}
    Under Assumption~\ref{assumption1}, the value function is continuous and is the unique viscosity solution of the equation 
    \begin{align}\label{eq:filtering}
        -\pa_t v(t,\mu)=& \inf_{a \in A} K(a, \mu, D_{\mu} v(t,\mu), D_{x \mu} v(t,\mu), \mathcal{H}v(t,\mu)) \notag \\
        v(T,\mu)=&\mu(l). 
    \end{align}
\end{theorem}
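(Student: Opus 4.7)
The argument will have three main pieces: (i) a dynamic programming principle (DPP) together with continuity of $v$; (ii) the verification that $v$ is both a viscosity sub- and supersolution of \eqref{eq:filtering}; and (iii) uniqueness via Theorem~\ref{thm:comparison}. The plan is to first establish the standard DPP for the filtered control problem,
\[
    v(t,\mu)
    ~ = ~
    \inf_{\a \in \mathcal{A}}
    \E\left[\int_t^{t+h} r(X^{t,\mu,\a}_s,\a_s)\,ds
    + v(t+h, m^{t,\mu,\a}_{t+h})\right],
\]
for every $h \in [0,T-t]$. Under Assumption~\ref{assumption1}, bounded Lipschitz coefficients give $L^2$-continuous dependence of $X^{t,\mu,\a}$ on initial data, and standard filtering estimates on \eqref{eq:conditionlaw} give continuity of $s \mapsto m^{t,\mu,\a}_s$ in the weak topology, uniformly in $\a$. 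Combining these with the boundedness of $r,l$ yields the continuity of $v$ on $[0,T]\times \Pc_2(\R^d)$ for the weak topology, and also the terminal condition $v(T,\mu) = \mu(l)$.

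For the viscosity property, I would argue in the standard way: if $\phi$ is partial $C^2$-regular and $v-\phi$ attains a local maximum (resp.\ minimum) at $(t_0,\mu_0)$, then applying the DPP with a constant control $\a_s \equiv a$ and Itô's formula (expressed via the functional derivatives $D_\mu\phi$, $D_{x\mu}\phi$, and the partial Hessian $\Hc\phi$ associated with the diffusion from the Zakai-type equation \eqref{eq:conditionlaw}) produces
\[
   -\pa_t\phi(t_0,\mu_0)
   - K(a,\mu_0,D_\mu\phi,D_{x\mu}\phi,\Hc\phi)(t_0,\mu_0) \le 0
\]
for the subsolution side, and the reverse inequality after taking infimum over $a\in A$ for the supersolution side. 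The only non-routine issue is to carefully derive the form of the generator: the drift $b(\cdot,a)$ produces the $D_\mu$ term, the observation noise $\tilde\sigma(a)$ produces the $\Hc$ term via the barycenter second derivative, and the signal noise $\sigma(\cdot,a)$ produces the $D_{x\mu}$ term. Compactness of $A$ allows standard measurable selection to pass the infimum through.

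The main obstacle is uniqueness, for which I need to verify Assumption~\ref{ass:comparison} for $G(t,\mu,p,q,X) := \inf_{a \in A} K(a,\mu,p,q,X)$. Item (i) is clear: each term of $K$ is linear in $(p,q,X)$ with $L^\infty$ bounds on $b,\sigma,\tilde\sigma$, and the modulus against $(1+|m|+\int|x|\mu(dx))$ arises from the integrals $\int|p(x)|\mu(dx) \le |p|_l(1+\int|x|\mu(dx))$ after pushing forward by $(I_d+m)_\sharp\mu$. Item (ii) is the delicate part and is where uniform ellipticity of $\sigma$ is used. The troublesome term when comparing $G^e(\theta,\nabla\kappa,\nabla^2\kappa,X)$ and $G^e(\iota,\nabla\kappa,\nabla^2\kappa,-Y)$ is the drift term $\int b^\top \nabla\kappa\,d\mu - \int b^\top \nabla\kappa\,d\nu$, which is only first-order in the penalization kernel $\kappa$ and does not benefit directly from $X+Y \le 0$. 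The plan is to use the commutator estimates of Section~5 (in the spirit of \cite{gozzi2000hamilton}): rewrite this drift difference via integration by parts against the Fourier kernel defining $\kappa$, and bound it by the second-order Fourier-Wasserstein quantity $\frac{1}{\e}\rho_F^2(\mu,\nu)$ plus $\rho_F(\mu,\nu)$, absorbing the excess into the diffusion contribution using uniform ellipticity of $\sigma\sigma^\top$. The Sobolev regularity assumption in Assumption~\ref{assumption1}(iii), together with the choice of $\lambda$ in \eqref{eq:defn-lambda}, is exactly what is needed to make these Fourier-side manipulations integrable. Once Assumption~\ref{ass:comparison} is verified, Theorem~\ref{thm:comparison} applies and gives $v \le v$ (and $\ge$) against any other viscosity solution, yielding uniqueness.
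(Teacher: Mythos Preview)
Your proposal is correct and follows essentially the same approach as the paper: continuity and the viscosity property are handled by standard DPP arguments, while uniqueness is obtained by verifying Assumption~\ref{ass:comparison} for the Hamiltonian, with part (ii) relying on the commutator estimates of Section~5 (Proposition~\ref{prop:commutator} applied with $\eta=\mu-\nu$ and $\kappa=\frac{1}{\e}\Jc_{2\lambda}\eta$) so that the elliptic diffusion term produces the negative Sobolev norm $-\frac{\delta}{4\e}|\mu-\nu|_{1-\lambda}^2$ which absorbs the drift contribution. The only refinement worth noting is that the paper makes the verification of (ii) explicit by telescoping $K^e(\theta,\cdot)-K^e(\iota,\cdot)$ into a ``$\mu$ vs $\nu$ at fixed $m$'' piece (handled by Proposition~\ref{prop:commutator}) and an ``$m$ vs $n$ at fixed $\nu$'' piece (handled by Lipschitz bounds on $r,b,\sigma\sigma^\top$ together with $|\nabla\kappa|_{L^\infty},|\nabla^2\kappa|_{L^\infty}\le C\rho_F(\mu,\nu)/\e$), which is the concrete way your high-level plan is carried out.
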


To prove uniqueness, we verify that the Hamiltonian of \eqref{eq:filtering} satisfies Assumption~\ref{ass:comparison}. For that purpose, we need the  commutator estimates inspired by \cite{gozzi2000hamilton} in Subsection 5.1, which is the key technical result of this paper. We would like to mention that these types of estimates have been used in \cite{bayraktar2024convergence} and an earlier ArXiv version of \cite{MeYa23} when $\sigma$ is constant. Following \cite[Corollary 5.6]{WDPP} line by line, one can show that the upper semicontinuous (lower semicontinuous) envelop $v^*$ ($v_*$) is a viscosity subsolution (supersolution) to \eqref{eq:filtering}. Then by the comparison principle, we have $v^*=v_*$, and hence $v$ is the unique continuous viscosity solution.

\subsection{An application to prediction problems under partial monitoring}

In \cite{JMLR:v24:22-1001}, the authors investigated an adversarial learning problem, which can be formulated as a zero-sum game between a forecaster and an adversary. At each round, the forecaster chooses an action between $K \ge 2$ alternative actions based on his/her partial observations, aiming at performing as well as the best constant strategy, while the adversary aims at maximizing the forecaster's regret. 

In particular, suppose that there are $K$ actions. At each round $t$, the forecaster chooses an action $I_t \in [K] := \{1, 2, \cdots, K\}$, and the adversary chooses a set of winning actions $J_t \subset [K]$.
The total gain of the forecaster $G_t$ and the total gain of $G^i_t$ of action $i$ evolve as
\begin{align*}
    G_{t + 1} - G_t &= \mathds{1}_{I_t \in J_t},
    \\
    G^i_{t+1} - G^i_t &= \mathds{1}_{i \in J_t},
    ~ i = 1, 2, \cdots, K.
\end{align*}
The goal of the forecaster is to design a robust strategy that performs as well as the best constant strategy under any adversarial environment, i.e., to minimize $\max_{Adversary} \E[\max_i (G^i_t - G_t)]$, where \textit{Adversary} denotes the set of all possible adversarial environments.
Both the forecaster and the adversary are allowed to adopt randomized strategies. At each round, they decide on distribution $b_t \in \Pc([K])$ of $I_t$ and $a_t \in \Pc(\{0,1\}^K)$ of $J_t$ respectively.

Let us now describe information observed by the forecaster and his/her admissible strategies in the partial information problem.
At initial time $t = 0$, both the adversary and the forecaster get informed of the distribution $m_0$ of $(G^1_0 - G_0,G^2_0 - G_0, \cdots, G^K_0 - G_0)$. For any $t \ge 0$, we use the random variable $Y_t$ to indicate whether the forecaster makes a good decision or not, where
\begin{align*}
    Y_t := \mathds{1}_{I_t \in J_t} I_t - \mathds{1}_{I_t \notin J_t}I_t.
\end{align*}
Both players can observe the law of adversary's control $a_{t- 1}$, i.e. the choice of $J_{t - 1}$, and the indicator $y_{t - 1}$ which is a realization of $Y_{t-1}$ . Their accumulated information is given by 
\begin{align*}
    h_t := (m_0,a_0,y_0, \cdots,a_{t-1}, y_{t-1}) \in \Hc_t := \Pc(\R^K) \x \Big(\Pc(\{0,1\}^K) \x \{\pm i, i \in [K]\} \Big)^t.
\end{align*}
Then, the strategies of the forecaster and the adversary are measurable functions $\beta_t : \Hc_t \longrightarrow \Pc([K])$ and $\alpha_t : \Hc_t \longrightarrow \Pc(\{0,1\}^K)$ respectively. Define $\Ac$ to be the set of all possible strategies $\alpha := (\alpha_0, \alpha_1, \cdots, \alpha_{T-1})$ and $\Bc$ similarly.

Suppose this game starts from time $t$ with an initial distribution $\mu \in \Pc(\R^K)$. 
Then given any strategies $\alpha \in \Ac, \beta \in \Bc$, the regret for the forecaster is given by
\begin{align*}
    \gamma_T(t,\mu,\alpha,\beta)
    :=
    \E^{\mu,\alpha,\beta}[\max_i (G^i_T - G_T)| G^i_t - G_t \sim \mu].
\end{align*}
The goal of the forecaster is to solve
\begin{align*}
    v_T(t,\mu) := \inf_{\beta \in \Bc} \sup_{\alpha \in \Ac} \gamma_T(t,\mu,\alpha,\beta).
\end{align*}

Let us define the rescaled value functions
\begin{align*}
    u^T(s,\mu) 
    ~:= ~
    \frac{1}{\sqrt{T}}v_T(\lceil sT \rceil, (\sqrt{T}I_K)_\sharp\mu),
\end{align*}
and assume that $u^T$ converges to a function $u$, with $(t,\mu,m) \longrightarrow u(t,(I_K + m)_\sharp\mu)$ partial $C^2$-regular. Then $u$ should satisfy the PDE 
\begin{equation}\label{eq:regretPDE_original}
    \begin{split}
        -& \pa_tu(t,m) - \sup_{a \in \Pc(\{0,1\}^K)} \sum_i
        \\ &
        \frac{1}{2}u_i(t,m)\hat{a}(i)\bigg( \Vc_{a,i}^TD^2_{\mu\mu}u(t,\mu,[\mu],[\mu])\Vc_{a,i}
            ~ + \sum_{j : i \in j}\frac{a(j)}{\hat{a}(i)}e^T_{j^C}D_{x\mu}u(t,\mu,[\mu])e_{j^C}\bigg)
        \\ 
        + & \frac{1}{2}u_i(t,m)\hat{a}(-i)\bigg( \Vc_{a,-i}^TD^2_{\mu\mu}u(t,\mu,[\mu],[\mu])\Vc_{a,-i}
            ~ + \sum_{j : i \notin j}\frac{a(j)}{\hat{a}(-i)}e^T_{j}D_{x\mu}u(t,\mu,[\mu])e_{j}\bigg)
        ~ = ~ 0,
    \end{split}
\end{equation}
where $\{e_i : i= 1,\cdots,K\}$ is the canonical basis of $\R^K$, for any $j \subset [K]$, 
$e_j := \sum_{i:i \in j}e_i$
\begin{align*}
    &
    u_i(t,m)=\frac{d}{d\e}\big|_{\e=0} u(t, (I_d+\e e_i)_{\#} m),
    ~\forall (i,t,m) \in [K] \x [0,1] \x \Pc_2(\R^K)
    \\ &
    \hat{a}(i) := \sum_{j: i\in j}a(j),
    ~
    \hat{a}(-i) := \sum_{j: i \notin j}a(j), 
    ~\forall i \in [K],~ a \in \Pc(\{0,1\}^K),
    \\ &
    \Vc_{a,i} := \sum_{j: i \in j}\frac{a(j)}{\hat{a}(i)}e_{j^C},
    ~
    \Vc_{a,-i} := \sum_{j: i \notin j}\frac{a(j)}{\hat{a}(-i)}e_{j},
    \\ &
    D_\mu u(\mu,[\mu]) := \int_{\R^K}D_\mu u(\mu,x)\,\mu(dx),
    ~\forall \mu,\in  \Pc_2(\R^K),
    \\ &
    D^2_{\mu\mu}u(\nu,[\mu],[\mu]) := \int_{\R^K}\int_{\R^K}D^2_{\mu\mu} u(\mu,x,y)\,\mu(dx)\mu(dy),
    ~\forall \mu \in  \Pc_2(\R^K).
\end{align*}
Note that in the above PDE \eqref{eq:regretPDE_original} involves the product of $u_i$ and $D_{\mu\mu} u$, the Hamiltonian may become discontinuous in $u_i$ when $D^2_{\mu\mu} u$ explodes as $t \to 1$.

Thus it is more convenient to use the equation
\begin{equation}\label{eq:regretPDE}
    \begin{split}
                -& \pa_tu(t,m) - \sup_{i,a \in \Pc(\{0,1\}^K)} 
        \\ &
        \frac{1}{2}\hat{a}(i)\bigg( \Vc_{a,i}^TD^2_{\mu\mu}u(t,\mu,[\mu],[\mu])\Vc_{a,i}
            ~ + \sum_{j : i \in j}\frac{a(j)}{\hat{a}(i)}e^T_{j^C}D_{x\mu}u(t,\mu,[\mu])e_{j^C}\bigg)
        \\ 
        + & \frac{1}{2}\hat{a}(-i)\bigg( \Vc_{a,-i}^TD^2_{\mu\mu}u(t,\mu,[\mu],[\mu])\Vc_{a,-i}
            ~ + \sum_{j : i \notin j}\frac{a(j)}{\hat{a}(-i)}e^T_{j}D_{x\mu}u(t,\mu,[\mu])e_{j}\bigg)
        ~ = ~ 0,
    \end{split}
\end{equation}
to obtain regret bounds.

\begin{theorem}
     Assume that $u, -v: [0,T] \times \Pc_2({\R^K})  \rightarrow \R$ are bounded upper-semicontinuous functions, and $u$ (resp. $v$) is a viscosity subsolution (resp. supersolution) of the equation \eqref{eq:regretPDE}.

     Then, $u(1,\cdot) \le v(1,\cdot)$ implies that $u \le v$ for all $(t,\mu) \in [0,1] \x \Pc_2(\R^K)$.
\end{theorem}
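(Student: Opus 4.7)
The plan is to verify that the generator of \eqref{eq:regretPDE} satisfies Assumption~\ref{ass:comparison} and then apply Theorem~\ref{thm:comparison}. First I would recast \eqref{eq:regretPDE} in the standard form of \eqref{eq:HJB} using the identity
\begin{align*}
\int_{\R^K}\!\!\int_{\R^K} D^2_{\mu\mu}u(t,\mu,x,y)\,\mu(dx)\mu(dy) = \Hc u(t,\mu) - \int_{\R^K} D_{x\mu}u(t,\mu,x)\,\mu(dx),
\end{align*}
so that the Hamiltonian becomes $G(t,\mu,p,q,X) = \sup_{i,a}\bigl[F(X, Q_\mu, a, i) + F(X, Q_\mu, a, -i)\bigr]$, where $Q_\mu := \int q(x)\,\mu(dx)$ and
\begin{align*}
F(X, Q, a, i) := \tfrac{1}{2}\hat{a}(i)\,\Vc_{a,i}^\top X \Vc_{a,i} + \tfrac{1}{2}\hat{a}(i)\Bigl[\sum_{j: i \in j}\tfrac{a(j)}{\hat{a}(i)}\,e_{j^C}^\top Q\,e_{j^C} - \Vc_{a,i}^\top Q\,\Vc_{a,i}\Bigr],
\end{align*}
with the analogous $-i$ version obtained by swapping $e_{j^C}$ for $e_j$ and $\{j: i \in j\}$ for $\{j: i \notin j\}$. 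Note that $G$ does not depend on $p$, is linear in $X$ and in $Q_\mu$, with coefficients that are uniformly bounded since $\Pc(\{0,1\}^K)$ is compact.

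Next I would check Assumption~\ref{ass:comparison}(i). Since $|q(x)| \le |q|_l(1+|x|)$ gives $|Q_\mu| \le |q|_l\bigl(1 + \int |x|\,\mu(dx)\bigr)$, and since the substitution $\mu \mapsto (I_K+m)_{\#}\mu,\ q \mapsto q(\cdot-m)$ defining $G^e$ preserves $Q_\mu$ by a change of variables, the linear-growth Lipschitz bound in $(p,q,X)$ follows directly from the uniform boundedness of the coefficients.

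The crucial step is Assumption~\ref{ass:comparison}(ii). Using $\sup f - \sup g \le \sup(f-g)$ together with the linearity of $F$ in $(X, Q)$,
\begin{align*}
G^e(\theta, \nabla\kappa, \nabla^2\kappa, X) - G^e(\iota, \nabla\kappa, \nabla^2\kappa, -Y) \le \sup_{i,a}\bigl[F(X+Y, Q^\kappa, a, i) + F(X+Y, Q^\kappa, a, -i)\bigr],
\end{align*}
where $Q^\kappa := \int \nabla^2\kappa(x)\,(\mu-\nu)(dx)$. The quadratic-in-$X+Y$ parts are non-positive because $X \le -Y$. For the $Q^\kappa$ parts, a direct Fourier computation using $\nabla^2 f_k(x) = -kk^\top f_k(x)$ gives
\begin{align*}
Q^\kappa = -\frac{1}{\e}\int_{\R^K}\frac{|F_k(\mu-\nu)|^2\,kk^\top}{(1+|k|^2)^\lambda}\,dk \preceq 0,
\end{align*}
so $v \mapsto v^\top Q^\kappa v$ is concave, and Jensen's inequality applied to the probability weights $\{a(j)/\hat{a}(i)\}_{j : i \in j}$ yields $\sum_{j: i \in j}\tfrac{a(j)}{\hat{a}(i)}\,e_{j^C}^\top Q^\kappa e_{j^C} \le \Vc_{a,i}^\top Q^\kappa \Vc_{a,i}$, making the bracketed $Q^\kappa$ term in $F$ non-positive; the analogous argument handles the $-i$ variant. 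Consequently the entire left-hand side is $\le 0$, so Assumption~\ref{ass:comparison}(ii) holds trivially with $\omega_G \equiv 0$, and Theorem~\ref{thm:comparison} concludes the proof.

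The main obstacle is recognizing the Jensen structure of this last step: the specific algebraic form of \eqref{eq:regretPDE} --- a difference between an average of quadratic forms and the quadratic form at the average --- produces precisely the sign cancellation needed to accommodate the degenerate idiosyncratic noise of the prediction problem, where no ellipticity assumption is available.
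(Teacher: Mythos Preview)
Your proposal is correct and follows essentially the same approach as the paper: verify Assumption~\ref{ass:comparison} and invoke Theorem~\ref{thm:comparison}, observing that $G^e$ is independent of $m$ by the change of variables and that the key sign condition in (ii) comes from the variance-type structure of the $Q^\kappa$ term. The only cosmetic differences are that the paper phrases your Jensen step as a Cauchy--Schwarz inequality applied pointwise in the Fourier variable $k$, and handles the second-order matrix term via monotonicity of $M\mapsto K(i,a,\mu,q,M)$ rather than writing out $X+Y\le 0$ explicitly---both amount to the same computation.
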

\begin{proof}
    It's sufficient to verify that the Hamiltonian
    \begin{align*}
        G : \Pc_2(\R^K) \x B_l^{K \x K} \x \S_K  &\longrightarrow \R
        \\ 
        (\mu,q,M) & \longmapsto \sup_{i,a \in \Pc(\{0,1\}^K)} K(i,a,\mu,q,M),
    \end{align*}
    where
    \begin{align*}
        K(i,a,\mu,q,M) := ~
        &
                \frac{1}{2}\hat{a}(i)\bigg( \Vc_{a,i}^TM \Vc_{a,i}
            ~ + \sum_{j : i \in j}\frac{a(j)}{\hat{a}(i)}e^T_{j^C}\int_{\R^K} q\, \mu(dx)(e_{j^C} - \Vc_{a,i})\bigg)
        \\ 
        + & \frac{1}{2}\hat{a}(-i)\bigg( \Vc_{a,-i}^TM\Vc_{a,-i}
            ~ + \sum_{j : i \notin j}\frac{a(j)}{\hat{a}(-i)}e^T_{j}\int_{\R^K} q\, \mu(dx)(e_{j} - \Vc_{a,-i})\bigg),      
    \end{align*}
    satisfies the Assumption \ref{ass:comparison}, 
    since for $(\mu,m,q,M) \in \Pc_2(\R^K) \x \R^K \x  B_l^{K \x K} \x \S_K$, 
    \begin{align*}
        G^e(\mu,m,q,M) = & ~ G((I_K + m)_\sharp\mu,q(\cdot -m),M)
        \\ = & \sup_{i,a \in \Pc(\{0,1\}^K)} K(i,a,(I_K + m)_\sharp\mu,q(\cdot -m),M)
        \\ = & \sup_{i,a \in \Pc(\{0,1\}^K)} K(i,a,\mu,q,M)
        ~ = ~ G(\mu,q,M).
    \end{align*}

    For Assumption \ref{ass:comparison} (i), we consider any $\mu \in \Pc_2(\R^K)$, and $q_1,q_2 \in B_l^{K\x K}$, $M_1, M_2 \in \S_K$,
    \begin{align*}
        & ~
        |G(\mu,q_1,M_1) - G(\mu,q_2,M_2)| 
        \\~ \le & ~
        \sup_{i,a \in \Pc(\{0,1\}^K)} |K(i,a,q_1,M_1) - K(i,a,q_2,M_2)|
        \\~ \le & ~
        \sup_{i,a \in \Pc(\{0,1\}^K)} \frac{1}{2}\Big(\hat{a}(i)|\Vc_{a,i}|^2 + \hat{a}(-i)|\Vc_{a,-i}|^2\Big)\|M_1 - M_2\|
        \\& +  ~ \frac{1}{2}\bigg( 1 + \int_{\R^K}|x| \, \mu(dx)\bigg)
        \Big(\hat{a}(i)\big(|\Vc_{a,i}|^2 + 2^{K-1}\big) + \hat{a}(-i)\big(|\Vc_{a,-i}|^2 + 2^{K-1}\big)\Big)|q_1 - q_2|_l^{d \x d}
        \\ ~ \le & ~
        2^{3K - 2}\bigg( 1 + \int_{\R^K}|x| \, \mu(dx)\bigg)(|q_1 - q_2|_l^{d \x d} + \|M_1 - M_2\|),
    \end{align*}
    where the second inequality holds true by the fact that the number of subsets of $\{0,1\}^K$ that contain the element $i$ is $2^{K - 1}$,  
    the third inequality by that $|\Vc_{a,i}|, |\Vc_{a,-i}|, |\hat{a}(i)|, |\hat{a}(-i)|  \le 2^{K - 1}$.

    For Assumption \ref{ass:comparison} (ii), recalling that  
    for $\mu, \nu \in \Pc(\R^K)$
    \begin{align*}
          \kappa(x)
                ~ := ~
                \frac{1}{\e}\int_{\R^K}
                \frac{Re(F_k(\mu - \nu)f^*_k(x))}
                {(1 + |k|^2)^\lambda}
                dk,
        \\
        \int_{\R^K} \nabla^2\kappa\, (\mu - \nu)(dx)
        ~ := ~
        - \frac{1}{\e}\int_{\R^K}
                \frac{|F_k(\mu - \nu)|^2kk^T}
                {(1 + |k|^2)^\lambda}
                dk,
    \end{align*}
    and $M \longmapsto K(i,a,\mu,q,M)$ is increasing, it is sufficient to prove that
    \begin{align*}
        &~ K(i,a,\mu,\nabla^2\kappa, M) - K(i,a,\nu,\nabla^2\kappa, M) 
        \\ = ~& ~
        \frac{1}{2}\bigg(\sum_{j : i \in j}a(j)e^T_{j^C}\int_{\R^K} \nabla^2\kappa(x)\, (\mu - \nu)(dx)(e_{j^C} - \Vc_{a,i})
        \\ & \quad +
        \sum_{j : i \notin j}a(j)e^T_{j}\int_{\R^K} \nabla^2\kappa(x)\, (\mu - \nu)(dx)(e_{j} - \Vc_{a,-i})
        \bigg)
        \\ = ~ & ~
        - \frac{1}{\e}\int_{\R^K}
            \frac{|F_k(\mu - \nu)|^2}{(1 + |k|^2)^\lambda}
            \bigg(\sum_{j: i \in j}a(j)|k^Te_{j^C}|^2 
                -\hat{a}(i)|k^T\Vc_{a,i}|^2 \bigg)
            dk
            \\ & ~ - \frac{1}{\e}\int_{\R^K}
            \frac{|F_k(\mu - \nu)|^2}{(1 + |k|^2)^\lambda}
            \bigg(\sum_{j: i \notin j}a(j)|k^Te_{j}|^2 
                -\hat{a}(-i)|k^T\Vc_{a,-i}|^2 \bigg) 
            dk
        ~ \le ~ 0,
    \end{align*}
    where the inequality holds true by Cauchy-Schwarz inequality,
    \begin{align*}
        \bigg|\sum_{j : i \in j} \frac{a(j)}{\hat{a}(i)}k^Te_{j^C}\bigg|^2
        ~ \le ~
        \sum_{j : i \in j} \frac{a(j)}{\hat{a}(i)}\big|k^Te_{j^C}\big|^2,
        \quad
        \bigg|\sum_{j : i \notin j} \frac{a(j)}{\hat{a}(-i)}k^Te_{j}\bigg|^2
        ~ \le ~
        \sum_{j : i \notin j} \frac{a(j)}{\hat{a}(-i)}\big|k^Te_{j}\big|^2.
    \end{align*}
\end{proof}

\section{Proofs of the results in Section~\ref{sec:appli-1}}

First, we need to develop some results in order to help us show that we can apply the comparison principle to the stochastic filtering problem we discussed above.

\subsection{Estimates on the Sobolev 
norms of products}

% Let $k \in \N_+$, the Sobolev space $H^{k}$ is defined as
% \begin{align*}
%     H^{k}(\R^d)
%     ~ := ~
%     \{u \in L^2(\R^d): |u|_{k} := \sum_{|\alpha|\le k}\|\pa^\alpha u\|_2 < \infty\},
% \end{align*}
% where for multi-index $\alpha = (\alpha_1,\cdots, \alpha_d) \in \N^d$,
% \begin{align*}
%     |\alpha| := \sum_{i = 1}^d\alpha_i,
%     \quad
%     \pa^\alpha u := \frac{\pa^{|\alpha|}u}{\pa {x_1}^{\alpha_1} \cdots \pa {x_d}^{\alpha_d}},
%     \quad
%     \|u\|_2 := \bigg(\int_{\R^d} |u|^2 dx \bigg)^{\frac{1}{2}}.
% \end{align*}
% For $k \in \N$, the Sobolev space $H^{-k}(\R^d)$ is defined as the topological dual of $H^{k}(\R^d)$.
We recall the following theorem, the proof of which can be found in \cite{behzadan2021multiplication}.
\begin{theorem}\label{appendix:mul}
    Assume $s_i \in \R$ $(i = 1,2)$ and $s < 0$   are real numbers that satisfy $s_i \ge s$, $\min\{s_1,s_2\} < 0$, $s_1 + s_2 - s > \frac{d}{2}$, $s_1 + s_2 \ge 0$. Then the pointwise multiplication of functions extends uniquely to a continuous linear map
    \begin{align*}
        H^{s_1}(\R^d) \x H^{s_2}(\R^d)
        \longrightarrow H^{s}(\R^d),
    \end{align*}
    i.e.,
    there exists some constant $C > 0$, s.t.
    for any $u \in H^{s_1}(\R^d) $, $v \in H^{s_2}(\R^d) $, 
    \begin{equation*}
        |uv|_{s} \le C|u|_{s_1}|v|_{s_2}.
    \end{equation*}
\end{theorem}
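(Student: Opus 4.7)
The plan is to reduce the claim, via the Fourier transform, to an $L^2$-bilinear convolution estimate with an explicit Sobolev weight, and then to control that convolution by the Peetre inequality and a Schur-type argument. Starting from $|f|_s^2 = \int_{\R^d}(1+|\xi|^2)^s|\hat f(\xi)|^2\,d\xi$ and the identity $\widehat{uv}=(2\pi)^{-d/2}\hat u * \hat v$, I would substitute $U(\eta):=(1+|\eta|^2)^{s_1/2}\hat u(\eta)$ and $V(\eta):=(1+|\eta|^2)^{s_2/2}\hat v(\eta)$, so that $\|U\|_{L^2}=|u|_{s_1}$ and $\|V\|_{L^2}=|v|_{s_2}$. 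The desired bound is then equivalent to an $L^2\times L^2 \to L^2$ estimate $\|T(U,V)\|_{L^2}\le C\|U\|_{L^2}\|V\|_{L^2}$ for the bilinear operator $T(U,V)(\xi):=\int_{\R^d} K(\xi,\eta)\,U(\eta)\,V(\xi-\eta)\,d\eta$ with the explicit weight $K(\xi,\eta):=(1+|\xi|^2)^{s/2}(1+|\eta|^2)^{-s_1/2}(1+|\xi-\eta|^2)^{-s_2/2}$. Density of the Schwartz class in $H^{s_i}(\R^d)$ then produces the unique continuous extension claimed.

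For the bilinear estimate I would split the $\eta$-integral into the two symmetric regions $R_1=\{|\eta|\le|\xi-\eta|\}$ and $R_2=\{|\eta|>|\xi-\eta|\}$, and use Peetre's inequality $(1+|\xi|^2)^{s/2}\le C(1+|\eta|^2)^{|s|/2}(1+|\xi-\eta|^2)^{s/2}$ (and its symmetric counterpart), valid for any $s\in\R$, to transfer the weight $(1+|\xi|^2)^{s/2}$ onto whichever of $(1+|\eta|^2)$, $(1+|\xi-\eta|^2)$ is dominant in each region. After this redistribution, $T$ is dominated in each region by a kernel of product form $(1+|\eta|^2)^{\alpha}(1+|\xi-\eta|^2)^{\beta}$ multiplying $U(\eta)V(\xi-\eta)$, and I would close the $L^2$ bound by one application of Cauchy--Schwarz in $\eta$ followed by Schur's test (equivalently, Young's convolution inequality in weighted form).

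The arithmetic condition $s_1+s_2-s>d/2$ enters exactly at the Schur-test stage: in the diagonal asymptotic region where $|\eta|\sim|\xi-\eta|\to\infty$, the relevant kernel decays like $|\eta|^{-2(s_1+s_2-s)}$, and this is $L^1_\eta$ precisely when $2(s_1+s_2-s)>d$. The sign hypotheses $s<0$, $s_1+s_2\ge 0$, and $\min(s_1,s_2)<0$ together guarantee that after Peetre redistribution the remaining exponents are in the range where the weighted Young/Schur estimates actually apply without loss.

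The main obstacle I anticipate is the case analysis forced by the mixed signs: with $s_1+s_2\ge 0$ but $\min(s_1,s_2)<0$, a naive application of Peetre on either side can over-weight the negative-index factor, so one must carefully track which direction of the inequality is used in $R_1$ versus $R_2$. A conceptually cleaner alternative, which I expect is the route actually taken in \cite{behzadan2021multiplication}, is to replace the hand-split with a Littlewood--Paley decomposition and Bony's paraproduct identity $uv=T_uv+T_vu+R(u,v)$; each of the three pieces reduces to an $\ell^2$-summable family of dyadic block estimates, and the sharpness of the threshold $s_1+s_2-s>d/2$ re-emerges as exactly the summability condition for the resonant term $R(u,v)$ where the high-frequency support of $\Delta_ju\cdot\Delta_kv$ with $j\sim k$ fails to separate the two frequencies.
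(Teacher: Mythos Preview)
The paper does not actually prove this theorem: it is stated as a recalled result with the line ``the proof of which can be found in \cite{behzadan2021multiplication}'' and no argument is given. So there is no in-paper proof to compare your proposal against.

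That said, your sketch is a reasonable route to the result and is in the spirit of the standard proofs of such Sobolev multiplication theorems. The Fourier reduction to a weighted bilinear convolution estimate, followed by a dyadic or region split and a Schur/Young argument with Peetre's inequality, is essentially how these inequalities are established in the literature; the paraproduct alternative you mention is the other standard approach and is indeed closer to what the cited reference does. The one place to be careful in your direct-kernel version is exactly the sign bookkeeping you flag: with $s<0$, $\min(s_1,s_2)<0$, and $s_1+s_2\ge 0$, the Peetre redistribution must be done in the correct direction on each region so that the resulting exponent on the ``small'' variable is nonpositive before you invoke integrability; otherwise the Schur test fails. If you carry that case analysis through cleanly (or simply defer to the paraproduct decomposition), the argument closes.
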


\subsection{Commutator estimates}

The class of Schwartz functions  $\mathcal{S}(\mathbb R^d)$ is the space of smooth functions whose derivatives are bounded by $C_N(1+|\xi|^2)^{-N}$ for every $N \in \mathbb Z^+$, equipped with the topology induced by the family of seminorms
\begin{align*}
    \rho_{\alpha,\beta}(\phi)=\sup_{\xi \in \mathbb R^d}  \left|\xi^{\alpha} \partial_{\beta}\phi(\xi) \right|, \quad \forall \, \phi \in \mathcal{S}({\mathbb R^d}),
\end{align*}
indexed by all multi-indices $\alpha,\beta$. Denote by $\mathcal{S}'(\mathbb R^d)$ the topological dual of $\mathcal{S}(\mathbb R^d)$. 

\begin{definition}\label{def:Sobolev}
    Let $s$ be a real number. The space $H^s(\mathbb R^d)$ is defined as the set of all distributions $u$ in $\mathcal{S}'(\mathbb R^d)$ with the property that 
  x  \begin{align}
        \mathcal{J}_{-s} (u):= (I_d - \Delta)^{s/2} u = \mathcal{F}^{-1} ((1+|\xi|^2)^{s/2} \mathcal{F}u(\cdot) )
    \end{align}
    is an element of $L^2(\mathbb R^d)$. Here, $\mathcal{F}$ denotes the Fourier transform. $\Jc_{-s}$ is called the Bessel potential operator, and $|u|_s:=|\Jc_{-s} u |_{L^2}=|(1+|\xi|^2)^{s/2} \mathcal{F}u(\cdot)|_{L^2} $.
\end{definition}

We collect some basic properties of $H^s(\mathbb R^d)$ and $\mathcal{J}_s$ in the following lemma; see \cite{AdHe96,Gr14} for more details. 
\begin{lemma}
\begin{itemize}
\item For any real numbers $s \leq r$, $H^s(\mathbb R^d) \supset H^r(\mathbb R^d) \supset \mathcal{S}(\mathbb R^d)$. Any finite signed measure $\eta$ is an element of $H^{s}(\mathbb R^d)$ with $s < -d/2$. 
\item For any real number $s,r$,  $\Jc_{-r} \Jc_{-s} f = \Jc_{-(r+s)} f$, for any $f \in H^{r+s}(\R^d) \cap  H^{s}(\R^d)$.
\item If $s$ is a nonnegative integer, $H^{s}(\mathbb R^d)$ is exactly the Sobolev space of order $s$. 
\item For any real number $s,r$, $\Jc_{-s}$ is an isomorphism from $H^{s+r}(\mathbb R^d)$ to $H^r(\mathbb R^d)$. 
\item The Bessel potential operator commutes with derivatives, that is, $D_{x_i} \Jc_{-s} f = \Jc_{-s} D_{x_i} f$, for any $f \in  H^{1+s}(\mathbb R^d)$ and $i=1,\dotso,d$. 
\item For any $f \in H^s(\mathbb R^d) \cap H^{s+r}(\mathbb R^d), g \in H^r(\mathbb R^d)$, it holds that $\langle \Jc_{-s} f, \Jc_{-r} g \rangle_{L^2}= \langle \Jc_{-(s+r)} f, g \rangle_{L^2}$.  
\end{itemize}
\end{lemma}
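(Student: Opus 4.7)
The proposal is to handle all six items by transferring everything to the Fourier side, where the operators $\Jc_{-s}$ become multiplication by the weight $w_s(\xi) := (1+|\xi|^2)^{s/2}$, and the Sobolev norm becomes the weighted $L^2$-norm $|u|_s = |w_s \,\hat u|_{L^2}$. With this dictionary almost every item reduces to an algebraic identity of weights plus Plancherel.

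For item~(i), note $w_s \le w_r$ pointwise whenever $s \le r$ since $1+|\xi|^2 \ge 1$; hence $|u|_s \le |u|_r$, giving $H^r \subset H^s$. For $\phi \in \mathcal{S}(\R^d)$, $\hat\phi$ is Schwartz, so $w_r \hat\phi$ lies in $L^2$ for every $r$, giving $\Sc(\R^d) \subset H^r(\R^d)$. For a finite signed measure $\eta$, its Fourier transform $\hat\eta(\xi) = \int e^{-i\xi\cdot x}\,\eta(dx)$ is continuous and bounded by the total variation $\|\eta\|_{TV}$; then
\begin{equation*}
    |\eta|_s^2 = \int_{\R^d} (1+|\xi|^2)^{s} |\hat\eta(\xi)|^2 \, d\xi \le \|\eta\|_{TV}^2 \int_{\R^d} (1+|\xi|^2)^{s}\, d\xi,
\end{equation*}
and the last integral is finite exactly when $2s < -d$, i.e. $s < -d/2$, by switching to polar coordinates.

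For items~(ii), (iv), (v), (vi), I would simply unfold the definitions on the Fourier side. (ii) follows from $w_r \cdot w_s = w_{r+s}$, which yields $\Jc_{-r}\Jc_{-s}f = \Fc^{-1}(w_r w_s \hat f) = \Fc^{-1}(w_{r+s} \hat f) = \Jc_{-(r+s)}f$ as tempered distributions, valid whenever the intermediate quantities make sense; the assumption $f \in H^{r+s} \cap H^s$ ensures both $\Jc_{-s}f \in H^r$ and $\Jc_{-(r+s)}f \in L^2$. For (iv), the identity $|\Jc_{-s}f|_r = |w_r w_s \hat f|_{L^2} = |w_{r+s}\hat f|_{L^2} = |f|_{s+r}$ shows $\Jc_{-s}$ is an isometry $H^{s+r} \to H^r$, with two-sided inverse $\Jc_s$ by (ii). For (v), $\Fc(D_{x_i} \Jc_{-s}f) = (i\xi_i) w_s \hat f = w_s (i\xi_i \hat f) = \Fc(\Jc_{-s} D_{x_i} f)$. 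For (vi), Plancherel gives
\begin{equation*}
    \langle \Jc_{-s}f, \Jc_{-r}g \rangle_{L^2} = \langle w_s \hat f, w_r \hat g\rangle_{L^2} = \langle w_{s+r}\hat f, \hat g\rangle_{L^2} = \langle \Jc_{-(s+r)}f, g\rangle_{L^2},
\end{equation*}
with finiteness of all inner products guaranteed by the stated memberships.

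Item~(iii) is the only one that requires a small extra step. For $s \in \N_0$, I would show $|u|_s$ is equivalent to $\Big(\sum_{|\alpha| \le s} |D^\alpha u|_{L^2}^2\Big)^{1/2}$. By Plancherel, $|D^\alpha u|_{L^2}^2 = \int |\xi^\alpha \hat u(\xi)|^2\, d\xi$, and one checks the two-sided bound
\begin{equation*}
    c_s (1+|\xi|^2)^s \le \sum_{|\alpha| \le s} |\xi^\alpha|^2 \le C_s (1+|\xi|^2)^s, \quad \xi \in \R^d,
\end{equation*}
by comparing the leading-order behaviour as $|\xi| \to \infty$ and $|\xi|\to 0$; integrating against $|\hat u(\xi)|^2$ then gives the norm equivalence with the classical Sobolev norm, and the two spaces coincide. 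I expect essentially no obstacle here; the only care needed is in item~(ii) to interpret $\Jc_{-s}f$ for negative-index spaces as a tempered distribution so that the composition is well-defined, which the hypothesis $f \in H^{r+s} \cap H^s$ arranges.
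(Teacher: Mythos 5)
Your proposal is correct, and it reproduces the standard Fourier-multiplier treatment of Bessel-potential Sobolev spaces; the paper itself gives no proof and simply cites textbook references (Adams--Hedberg and Grafakos), which argue along exactly these lines. One minor point worth noting: in item~(vi), the right-hand side $\langle \Jc_{-(s+r)}f, g\rangle_{L^2}$ is literally an $L^2$ inner product only when $g \in L^2$, which the hypothesis $g \in H^r$ does not guarantee when $r<0$; it should rather be read as the $H^{-r}$--$H^r$ duality pairing extending the $L^2$ inner product. Your Fourier-side computation
\begin{equation*}
\int w_{s+r}(\xi)\,\hat f(\xi)\,\overline{\hat g(\xi)}\,d\xi
 = \int \bigl(w_s(\xi)\hat f(\xi)\bigr)\,\overline{\bigl(w_r(\xi)\hat g(\xi)\bigr)}\,d\xi
\end{equation*}
already shows the integral converges absolutely under $f \in H^s$, $g\in H^r$, so the identity holds with that interpretation; it would just be cleaner to say so explicitly.
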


In the rest, we denote by $\alpha = (\alpha_1, \dots, \alpha_d) \in \mathbb{N}^d$ a multi-index, and write $|\alpha| = \alpha_1 + \cdots + \alpha_d$.
% $\alpha! = \alpha_1! \cdots \alpha_d!$, and $x^\alpha = x_1^{\alpha_1} \cdots x_d^{\alpha_d}$ for $x = (x_1, \dots, x_d) \in \mathbb{R}^d$. 
For a smooth function $f : \mathbb{R}^d \to \mathbb{R}$, we denote
\[
D^\alpha_x f = \frac{\partial^{|\alpha|} f}{\partial x_1^{\alpha_1} \cdots \partial x_d^{\alpha_d}}.
\]
% With this notation, standard identities such as the product rule take the compact form
% \[
% D^\alpha_x(fg) = \sum_{\beta \le \alpha} \binom{\alpha}{\beta} (D_x^\beta f)(D_x^{\alpha - \beta} g),
% \quad \text{where } \binom{\alpha}{\beta} = \frac{\alpha!}{\beta!(\alpha-\beta)!}.
% \]
% This convention will be used throughout the paper for the differential calculus in several variables.

\begin{lemma}\label{lem:indu}
    For any integer $k\geq 1$ and any two functions $f,h \in C^{\infty}(\R^d)$, the following holds
    \begin{align}\label{eq:indu}
        \Jc_{-2k}(fh)=f\Jc_{-2k}h+\sum_{ |\alpha| \leq 2k-1}L_{k,\alpha}(f)D_x^{\alpha} h
    \end{align}
    where $L_{k,\alpha}(f)(x)$ is a linear combination of $D^{\alpha}_x f(x)$, with $|\alpha| \le 2k-1$.
\end{lemma}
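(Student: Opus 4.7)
The plan is to observe that for integer $k$ the Bessel potential operator $\Jc_{-2k}$ coincides with the constant-coefficient differential operator $(I_d - \Delta)^k$ of order $2k$, and then derive the formula by iterating the classical Leibniz rule for $\Delta$. Both sides of \eqref{eq:indu} are thus interpreted as pointwise differential expressions on smooth $f,h$, so no ambient function space is required.

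I would proceed by induction on $k$. For $k = 1$, direct computation using $\Delta(fh) = f\Delta h + 2\nabla f\cdot\nabla h + h\Delta f$ gives
\[
    \Jc_{-2}(fh) = f\Jc_{-2}h - 2\nabla f\cdot\nabla h - (\Delta f)h,
\]
so $L_{1,1}(f) = -2\nabla f$ and $L_{1,0}(f) = -\Delta f$, satisfying the asserted degree bound $2k - j = 2 - j$. For the inductive step, I would write $\Jc_{-2(k+1)} = \Jc_{-2}\,\Jc_{-2k}$ and apply $\Jc_{-2}$ to the decomposition supplied by the induction hypothesis. The leading piece $\Jc_{-2}(f\,\Jc_{-2k}h)$ is handled by the base case and produces $f\Jc_{-2(k+1)}h$ together with $-2\nabla f\cdot \nabla \Jc_{-2k}h$ and $-(\Delta f)\Jc_{-2k}h$; because $\Jc_{-2k}$ commutes with $D_x$ and equals the polynomial $(I_d-\Delta)^k$ in $\Delta$, these remainders expand into constant-coefficient linear combinations of $D_x^{j'} h$ with $j'\le 2k+1$. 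Each remaining summand $\Jc_{-2}(L_{k,j}(f)\, D_x^j h)$ is expanded by the Leibniz rule for $\Delta$, yielding contributions of orders $j,\, j+1,\, j+2$ on $h$ with coefficients $L_{k,j}(f),\ \nabla L_{k,j}(f),\ \Delta L_{k,j}(f)$ respectively.

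The main step, which I expect to be purely bookkeeping rather than a genuine obstacle, is verifying the degree bound on the new coefficients $L_{k+1,j'}(f)$. Grouping by the order $j'$ of the derivative on $h$, each contribution involves at most two extra derivatives applied to some $L_{k,j}(f)$ with $j \in\{j',\, j'-1,\, j'-2\}$; since the induction hypothesis gives that $L_{k,j}(f)$ is a linear combination of $f,\ldots,D^{2k-j}f$, in each of the three cases the resulting coefficient is a linear combination of $f,\ldots,D^{2(k+1)-j'}f$, exactly matching the claim. The index shifts $0, -1, -2$ in $j$ are absorbed precisely by the $0, +1, +2$ extra derivatives placed on $L_{k,j}(f)$ by $\Delta$, which closes the induction.
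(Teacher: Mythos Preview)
Your proposal is correct and follows essentially the same approach as the paper: induction on $k$, with the base case handled by the Leibniz rule for $\Delta$ and the inductive step by applying $(I_d-\Delta)$ to each term of the decomposition at level $k$. The paper's proof is terser and does not spell out the degree-bound verification you include, but the structure is identical.
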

\begin{proof}
    The proof is by induction. For $k=1$, we have 
\begin{align*}
    (I-\Delta)(fh)=fh-\Delta (fh)=f(I-\Delta)h-2\sum_{i=1}^d D_{x_i} f D_{x_i}h -\Delta f h.
\end{align*}
For the induction step, we use the same inequality on \eqref{eq:indu} to obtain 
\begin{align*}
        \Jc_{-2(k+1)}(fh)=& (I-\Delta)(f\Jc_{-2k}h)+\sum_{|\alpha| \leq 2k-1} (I-\Delta)(L_{k,\alpha}(f)D_x^\alpha h)\\
        =&f\Jc_{-2(k+1)}(h)-2\sum_{i=1}^d D_{x_i} f\Jc_{-2k}D_{x_i}h-\Delta f\Jc_{-2k}h \\
        &+\sum_{|\alpha|\leq 2k-1}L_{k,\alpha}(f) (I-\Delta)D_x^\alpha h-2\sum_{i=1}^d(L_{k,\alpha}(D_{x_i}f)D_x^{\alpha}D_{x_i} h)-L_{k,\alpha}(\Delta f) D_x^\alpha h,
\end{align*}
which allows us to identify $L_{k+1,\alpha}$ for $|\alpha| \leq 2(k+1)-1$.
\end{proof}

In a similar spirit to \cite{gozzi2000hamilton}, we need the following commutator estimate.

\begin{lemma}\label{lem:commutator}
For any integer $k\geq 1$, and any functions $f \in H^{2k + \frac{d}{2} +1}(\R^d) \cap C^{\infty}(\R^d), g \in H^{1-2k}(\R^d)\cap C^{\infty}(\R^d)$, we have 
$$|\Jc_{2k} (fg)-f \Jc_{2k}g|_{L^2}\leq C|f|^2_{2k+d/2+1}|g|_{-2k-1/2}^2,$$
where $C$ is a constant depending on $d,k$.
\end{lemma}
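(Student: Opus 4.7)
The strategy is to convert the commutator $\Jc_{2k}(fg) - f\Jc_{2k}g$ into a finite sum of products of derivatives of $f$ against derivatives of $\Jc_{2k}g$ via Lemma~\ref{lem:indu}, and then estimate each summand using the multiplication theorem (Theorem~\ref{appendix:mul}).

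First I would set $h := \Jc_{2k} g$, so that $g = \Jc_{-2k} h$ by the mutual-inverse property of the Bessel potentials. Rearranging the identity in Lemma~\ref{lem:indu} gives $f\Jc_{-2k}h = \Jc_{-2k}(fh) - \sum_{j=0}^{2k-1} L_{k,j}(f) D_x^j h$, and applying $\Jc_{2k}$ to both sides produces the key algebraic identity
$$
\Jc_{2k}(fg) - f\Jc_{2k}g \;=\; -\Jc_{2k}\!\left(\sum_{j=0}^{2k-1} L_{k,j}(f)\, D_x^j \Jc_{2k} g\right).
$$
Since $\Jc_{2k}:L^2 \to H^{-2k}$ is an isometric isomorphism, this reduces the problem to bounding $\bigl|L_{k,j}(f)\,D_x^j \Jc_{2k} g\bigr|_{-2k}$ for each $j = 0, \ldots, 2k-1$.

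For each such $j$ I would apply Theorem~\ref{appendix:mul} with target index $s = -2k$, placing $s_2 := -j - \tfrac{1}{2}$ on the factor $D_x^j \Jc_{2k} g$ and choosing $s_1$ on the $L_{k,j}(f)$ factor large enough that the conditions $s_1 + s_2 - s > d/2$, $s_1 + s_2 \ge 0$, $s_i \ge s$, and $\min(s_1, s_2) < 0$ simultaneously hold. The $g$-side then collapses cleanly, since $\Jc_{2k}$ commutes with $D_x$ and satisfies $|\Jc_{2k} w|_r = |w|_{r - 2k}$:
$$
\bigl|D_x^j \Jc_{2k} g\bigr|_{-j - 1/2} \;\le\; \bigl|\Jc_{2k} g\bigr|_{-1/2} \;=\; |g|_{-2k - 1/2},
$$
which is exactly the Sobolev norm of $g$ appearing in the statement.

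The main obstacle is the bookkeeping on the $f$-side: $L_{k,j}(f)$ is a linear combination of $f, D_x f, \ldots, D_x^{2k-j} f$, so $|L_{k,j}(f)|_{s_1} \le C(k)\,|f|_{s_1 + 2k - j}$, and the choice of $s_1$ must simultaneously satisfy the four hypotheses of Theorem~\ref{appendix:mul} and produce the target Sobolev index on $f$. The binding case is $j = 0$, where the maximum number of derivatives falls on $f$ and the admissible range for $s_1$ is tightest; this is precisely where the regularity threshold $2k + d/2 + 1$ in the hypothesis on $f$ becomes saturated. Once an admissible choice is identified, I would sum over the $2k$ terms, absorb combinatorial constants into $C = C(d,k)$, and conclude.
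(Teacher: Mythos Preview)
Your proposal is correct and follows essentially the same route as the paper: both reduce via Lemma~\ref{lem:indu} to bounding $\bigl|L_{k,j}(f)\,D_x^j\Jc_{2k}g\bigr|_{-2k}$ and then invoke Theorem~\ref{appendix:mul} with $s=-2k$, $s_2=-j-\tfrac12$, and $s_1=\tfrac d2+1+j$, which yields exactly $|f|_{2k+d/2+1}$ on the $f$-side and $|g|_{-2k-1/2}$ on the $g$-side. One cosmetic slip: $\Jc_{2k}$ is an isometry $H^{-2k}\to L^2$ (not $L^2\to H^{-2k}$), but the identity $|\Jc_{2k}w|_{L^2}=|w|_{-2k}$ you actually use is correct.
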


\begin{proof}
By definition and \eqref{eq:indu}, we have 
\begin{align*}
    &~\quad|\Jc_{2k} (fg)-f\Jc_{2k}g|_{L^2}^2
    =|fg-\Jc_{-2k}(f\Jc_{2k}g)|_{-2k}^2=\left|\sum_{|\alpha|\le 2k -1}L_{k,\alpha}(f)D_x^\alpha \Jc_{2k}g \right|_{-2k}^2\\
    &\leq C_k\sum_{|\alpha|\le 2k -1}|L_{k,\alpha}(f)D_x^\alpha \Jc_{2k}g|_{-2k}^2 
    \leq C_k\sum_{|\alpha|\le 2k -1}|L_{k,\alpha}(f)|^2_{\frac{d}{2}+1+|\alpha|}|D^\alpha_x\Jc_{2k}g|^2_{-\frac{1}{2}-|\alpha|}
    \\ 
    &= C_k\sum_{|\alpha|\le 2k -1}|L_{k,\alpha}(f)|^2_{\frac{d}{2}+1+|\alpha|}|D^j_xg|^2_{-2k-\frac{1}{2}-|\alpha|}
    \leq C_k|f|^2_{2k+\frac{d}{2}+1}|g|^2_{-2k-\frac{1}{2}}. 
\end{align*}
where we used Theorem~\ref{appendix:mul} with $s_1 = \frac{d}{2}+1+|\alpha|$, $s_2 = -\frac{1}{2} - |\alpha|$, $s = -2k$ to obtain the second inequality. 

\end{proof}

\begin{remark}
    The commutator estimate above is inspired by \cite{gozzi2000hamilton}, in which they consider an HJB equation for the optimal control of Duncan-Mortensen-Zakai (DMZ) equation. 
    For relaxed solutions to the controlled  DMZ equation, they need the corresponding commutator estimate to generalize the standard estimates from the Sobolev norm to the weighted Sobolev norm.
    In our work, we use the commutator estimates to control the $D_\mu u$ terms that arise in the argument of doubling the variable, which are the problematic terms in terms of convergence.
    The idea is to use the difference terms in the other derivative $D_{x\mu}\mu$ to control the problematic terms.
    When $\sigma$ is state independent, this control is easy. We need to use the commutator estimates for state-dependent diffusion coefficients $\sigma$.
\end{remark}

For given functions $a,b$, define the operators
\begin{align*}
\mathcal{B}f(x)&=b^\top(x)D_xf(x),\,\mathcal{A}f(x)=\frac{1}{2}Tr\left(a(x)D^2_x f(x)\right).\\
\end{align*}

\begin{proposition}\label{prop:commutator}
Assume that $\xi^{\top} a(x)\xi \geq \delta |\xi|^2$ for all $x,\xi \in \R^d$.
    For any finite signed measure $\eta$, we have 
    \begin{align}\label{eq:boundf}
        \int_{\R^d} \left( \Ac(\Jc_{2\lambda}\eta)(x) + \Bc(\Jc_{2\lambda}\eta)(x)\right)\eta(dx) \leq -\frac{\d}{4} |\eta|_{1-\lambda}^2+c|\eta|_{-\lambda}^2
    \end{align}
    for a strictly positive constant $c$ depending on $\sup_{i,j}|a_{i,j}|_{k+d/2+1}.$
\end{proposition}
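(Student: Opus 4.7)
The plan is to lift the variable-coefficient inequality to a standard coercivity estimate for $\mathcal A+\mathcal B$ on $L^2$ by transferring half of the Bessel weight onto $\eta$. A direct inspection of \eqref{eq:defn-lambda} shows that $\lambda$ is always an even positive integer (in fact $\lambda=2\lfloor d/4\rfloor+4$), so $(I-\Delta)^{\lambda/2}$ is a classical differential operator and Lemma~\ref{lem:indu} is available with $k=\lambda/2$. I would set $w:=\Jc_\lambda\eta$, so that Plancherel identifies $|w|_{L^2}=|\eta|_{-\lambda}$, $|w|_{H^1}=|\eta|_{1-\lambda}$, and $\Jc_{2\lambda}\eta=(I-\Delta)^{-\lambda/2}w$. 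Because $(I-\Delta)^{\lambda/2}$ is self-adjoint and commutes with $D$, the quantity to estimate rewrites as
\begin{equation*}
\int(\mathcal A+\mathcal B)w\cdot w\,dx\;+\;E,\qquad E:=\int\bigl[(I-\Delta)^{\lambda/2},\,\mathcal A+\mathcal B\bigr]\Jc_{2\lambda}\eta\cdot w\,dx,
\end{equation*}
and the desired bound becomes the cleaner-looking $\le -\tfrac{\delta}{4}|Dw|_{L^2}^2+c|w|_{L^2}^2$, since $|Dw|_{L^2}^2=|\eta|_{1-\lambda}^2-|\eta|_{-\lambda}^2$.

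The main piece is handled by two integrations by parts: the Hessian part becomes $-\tfrac12\int a_{ij}D_iw\,D_jw\,dx-\tfrac12\int(D_ja_{ij})\,w\,D_iw\,dx$, where uniform ellipticity dominates the first summand by $-\tfrac{\delta}{2}|Dw|_{L^2}^2$ and the second is a $C_1|Dw|_{L^2}|w|_{L^2}$ cross term; the drift contribution $\int\mathcal Bw\cdot w\,dx=-\tfrac12\int(\nabla\cdot b)\,w^2\,dx$ is $\le C_2|w|_{L^2}^2$. For the commutator $E$, applying Lemma~\ref{lem:indu} with $k=\lambda/2$ to $\phi:=\Jc_{2\lambda}\eta$ produces the explicit identity
\begin{equation*}
\bigl[(I-\Delta)^{\lambda/2},\,a_{ij}\bigr]D_iD_j\phi=\sum_{m=0}^{\lambda-1}L_{\lambda/2,m}(a_{ij})\,D^mD_iD_j\phi,
\end{equation*}
together with an analogous expansion for the drift. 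Each coefficient $L_{\lambda/2,m}$ is a linear combination of up to $\lambda$ derivatives of $a_{ij}$, which are uniformly bounded via $\sup_{i,j}|a_{i,j}|_{\lambda+d/2+1}<\infty$ and the Sobolev embedding $H^{\lambda+d/2+1}\hookrightarrow C^{\lambda+1}$; summing the $L^2$ norms term by term and Plancherel identifying $|\phi|_{H^{\lambda+1}}^2=|\eta|_{1-\lambda}^2=|w|_{H^1}^2$ gives $\bigl|[(I-\Delta)^{\lambda/2},\mathcal A+\mathcal B]\phi\bigr|_{L^2}\le C|w|_{H^1}$, whence Cauchy--Schwarz yields $|E|\le C|w|_{H^1}|w|_{L^2}$.

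Combining everything and applying Young's inequality $C_\star|w|_{H^1}|w|_{L^2}\le \tfrac{\delta}{4}|Dw|_{L^2}^2+C_\delta|w|_{L^2}^2$ to every first-order cross term gives
\begin{equation*}
\int(\mathcal A+\mathcal B)(\Jc_{2\lambda}\eta)\,d\eta\le -\tfrac{\delta}{4}|Dw|_{L^2}^2+c|w|_{L^2}^2=-\tfrac{\delta}{4}|\eta|_{1-\lambda}^2+c'|\eta|_{-\lambda}^2,
\end{equation*}
which is \eqref{eq:boundf}. The delicate step I expect to be the main obstacle is matching the scales in the commutator bound: $[(I-\Delta)^{\lambda/2},\mathcal A]$ is naively an order-$(\lambda+1)$ operator on $\phi$, but applied to $\phi=\Jc_{2\lambda}\eta$ the exponents cancel so that its $L^2$ norm is exactly $|w|_{H^1}$ — precisely the "bad" quantity on the right-hand side of \eqref{eq:boundf} — allowing Young's inequality to absorb it into half of the gain from ellipticity while leaving the remaining $-\tfrac{\delta}{4}|Dw|_{L^2}^2$ intact.
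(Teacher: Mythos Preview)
Your overall strategy is correct and very close in spirit to the paper's: both transfer the Bessel weight onto $\eta$ to expose the elliptic main term $-\tfrac{\delta}{2}\int a_{ij}D_iw\,D_jw$, and then control the remainder by a commutator estimate absorbed via Young's inequality. The organization differs, however. The paper splits $\Jc_{2\lambda}=\Jc_{\lambda}\Jc_{\lambda}$, moves one factor to each side, and bounds the commutator $[\Jc_{\lambda},a_{ij}]D_i f$ in $L^2$ via Lemma~\ref{lem:commutator}, obtaining the sharper $|f|_{1/2-\lambda}$ on the right (hence $|f|_{1-\lambda}^{3/2}|f|_{-\lambda}^{1/2}$ after interpolation). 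You instead move the full differential weight $(I-\Delta)^{\lambda/2}$ across, land on the commutator $[(I-\Delta)^{\lambda/2},a_{ij}]D_iD_j\phi$, and bound it crudely by $C|\phi|_{H^{\lambda+1}}=C|w|_{H^1}$ using Lemma~\ref{lem:indu} and $L^\infty$ control of the coefficients; this avoids Lemma~\ref{lem:commutator} and Theorem~\ref{appendix:mul} for the second-order part, at the cost of a slightly coarser estimate that is nevertheless sufficient. The paper also first proves the inequality for smooth $f$ and then passes to the signed measure $\eta$ by mollification, whereas you work directly with $w=\Jc_{\lambda}\eta$; this is legitimate since $\eta\in H^{2-\lambda}$, but a line justifying the duality identities would be in order.

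There is one genuine technical gap: your commutator bound for the drift. You write ``analogous expansion for the drift'' and then invoke $H^{\lambda+d/2+1}\hookrightarrow C^{\lambda+1}$ to get $L^\infty$ coefficients, but the standing hypothesis (cf.\ Assumption~\ref{assumption1}(iii) and the paper's proof, which uses $|b|_{\lambda}$) is only $b\in H^{\lambda}$, so $D^{\lambda}b$ need not be bounded and $L_{\lambda/2,0}(b_i)$ is not in $L^\infty$. For small $d$ you can swap and put $D\phi\in L^\infty$, but for $d\ge 6$ there are intermediate $m$ for which neither factor is bounded. The fix is either to estimate each product $L_{\lambda/2,m}(b_i)\,D^{m+1}\phi$ in $L^2$ by a Sobolev multiplication inequality $H^{m}\cdot H^{\lambda-m}\hookrightarrow L^2$ (valid since $\lambda>d/2$), yielding $C|b|_{\lambda}|\phi|_{H^{\lambda+1}}=C|b|_{\lambda}|w|_{H^1}$, or---as the paper does---to bypass the drift commutator entirely and bound $\int \Bc(\Jc_{2\lambda}f)\,f$ directly via $|f|_{1-\lambda}|bf|_{-\lambda}\le C|b|_{\lambda}|f|_{1-\lambda}|f|_{-\lambda}$. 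Also, the embedding you cite should read $H^{\lambda+d/2+1}\hookrightarrow C^{\lambda}$ (not $C^{\lambda+1}$), which is still enough for your $\Ac$-commutator argument.
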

\begin{proof} 
We first show that for any smooth function $f \in H^{2-\lambda}(\mathbb R^d) \cap C^{\infty}(\mathbb R^d)$, we have 
    \begin{align}\label{eq:boundg}
        \int_{\R^d} \left( \Ac(\Jc_{2\lambda}f)(x) + \Bc(\Jc_{2\lambda}f)(x)\right)f(x)\,dx \leq -\frac{\d}{4} |f|_{1-\lambda}^2+c|f|_{-\lambda}^2,
    \end{align}
then we extend this result to signed measures.

    We first estimate the $\Ac$ term as follows
\begin{align*}
        &\quad~\int_{\R^d} \Ac(\Jc_{2\lambda}f)(x) f(x)\,dx= \frac{1}{2} \int Tr\left(a(x)D^2_x(\Jc_{2\lambda}f)(x)\right) f(x)\,dx\\
        &= \frac{1}{2} \sum_{i,j=1}^d\int_{\R^d} ( \Jc_{2\lambda}D_{x_ix_j}^2f)(x) (a_{i,j}f)(x)\,dx
            =-\frac{1}{2} \sum_{i,j=1}^d\int_{\R^d} \big(\Jc_{\lambda}D_{x_j}f\big)(x) \big(\Jc_{\lambda} D_{x_i}(a_{i,j}f)\big)(x)\,dx\\
        &=-\frac{1}{2} \sum_{i,j=1}^d\int_{\R^d} \big(\Jc_{\lambda}D_{x_j}f\big)(x) \left(\Jc_{\lambda}(a_{i,j}D_{x_i}f)(x)+\Jc_{\lambda}(fD_{x_i}a_{i,j})(x)\right)\,dx\\
        &=-\frac{1}{2} \sum_{i,j=1}^d\int_{\R^d} \big(\Jc_{\lambda}D_{x_j}f\big)(x)a_{i,j}(x)\big(\Jc_{\lambda}D_{x_i}f\big)(x)\,dx\\
            &\quad-\frac{1}{2} \sum_{i,j=1}^d\int_{\R^d} \big(\Jc_{\lambda}D_{x_j}f\big)(x) \left(\Jc_{\lambda}(a_{i,j}D_{x_i}f)(x)
            -a_{i,j}(x)\big(\Jc_{\lambda}D_{x_i}f\big)(x)
            +\Jc_{\lambda}(fD_{x_i}a_{i,j})(x)\right)\,dx\\
        &\leq -\frac{\d}{2}\int_{\R^d} \Big|\big(\Jc_{\lambda}D_{x_j}f\big)(x)\Big|^2\,dx+C_{\lambda,d}\sup_{i,j}|a_{i,j}|_{\lambda+d/2+1}| f|_{1-\lambda}\left(|f|_{1/2-\lambda}+|f|_{-\lambda}\right)\\
        &\leq -\frac{\d}{2}| f|^2_{1-\lambda}+\tilde C_{\lambda,d}|f|_{1-\lambda}|f|_{1/2-\lambda} \leq  -\frac{\d}{2}| f|^2_{1-\lambda}+\tilde C_{\lambda,d}|f|_{1-\lambda}^{3/2}\sqrt{|f|_{-\lambda}},
\end{align*}
where we used Lemma \ref{lem:commutator} with $2k = \lambda$, Theorem \ref{appendix:mul} with $s = -\lambda$, $s_1 = -\lambda$, $s_2 = \lambda + \frac{d}{2} + 1$ to obtain the first inequality,
and interpolation inequality to obtain the last inequality. 
Note that for any $A,B,\geq 0,$ $C>0$ we have 
$$A^{3/2}B^{1/2}=\left(\frac{A}{C}\right)^{3/2}(C^3B)^{1/2}\leq \frac{3}{4C^2}{A^2}+ \frac{C^6}{4}B^2.$$
Thus, up to changing $\tilde C_{\lambda,d}$ to $\tilde C_{\lambda,d,\d}$, we have
\begin{align*}
        \int_{\R^d} \Ac(\Jc_{2\lambda}f)(x) f(x)\,dx \leq  -\frac{\d}{3}| f|^2_{1-\lambda}+\tilde C_{\lambda,d,\d}|f|^2_{-\lambda}.
\end{align*}
We now estimate
\begin{align*}
      &\int_{\R^d} \Bc(\Jc_{2\lambda}f)(x)f(x)dx
      \\=&\int_{\R^d}(\Jc_{2\lambda}D_xf)(x) b(x)f(x)dx=\int_{\R^d} (\Jc_{\lambda}D_xf)(x) \Jc_{\lambda}(bf)(x)dx \\
      \le &~ |f|_{1 - \lambda}|bf|_{-\lambda}
      \le C|b|_{\lambda}|f|_{1 - \lambda}|f|_{-\lambda}.
      % =&\int_{\R^d} (\Jc_{\lambda}D_xf)(x) b(x)\Jc_{\lambda}f(x)dx
      % +\int_{\R^d} (\Jc_{\lambda}D_xf)(x) (\Jc_{\lambda}(bf)(x)-b(x)\Jc_{\lambda}f(x))dx.
\end{align*}
% Thus, similarly as above we have
% \begin{align*}
%       \left|\int_{\R^d} \Bc(\Jc_{2\lambda}f)(x)f(x)dx\right|
%       &\leq |b|_{L^\infty}|f|_{1-\lambda}|f|_{-\lambda}
%             +|f|_{1-\lambda}|\Jc_{\lambda}(bf)-b\Jc_{\lambda}f|_0 \\
%       &\leq  |b|_{L^\infty}|f|_{1-\lambda}|f|_{-\lambda}
%             +C_{\lambda,d}|b|_{{\lambda}+d/2+1}|f|_{1-{\lambda}}|f|_{-{\lambda}-1/2}\\
%       &\leq  |b|_{L^\infty}|f|_{1-\lambda}|f|_{-\lambda}
%             +C_{\lambda,d}|b|_{{\lambda}+d/2+1}|f|^{3/2}_{1-\lambda}|f|^{1/2}_{-\lambda}.  
% \end{align*}
Thus, using $\epsilon-$Young and Holder inequality, we obtain \eqref{eq:boundg}.

Since $2 - \lambda < -\frac{d}{2}$, we have that
$\eta \in H^{2 - \lambda}(\R^d)$, 
% then we claim that we can approximate $\eta$ by functions in $H^{2-\lambda}(\mathbb R^d)\cap C^\infty(\R^d)$. We 
define  $\eta_{\e}= \eta \ast G_{\e}$, and claim that $\eta_\e \in H^{2-\lambda} \cap C^\infty(\R^d)$, where $G_{\e}=\frac{1}{\sqrt{2 \pi \e^2 }}e^{-\frac{|x|^2}{2\e^2}}$ is the density function of Gaussian distribution with mean $0$ and variance matrix $\e^2 I_d$. 

It is clear that $\eta_\e \in C^\infty(\R^d)$.
Note that $|\Fc (\eta)|_{L^{\infty}}$ is bounded by the total variation $M$ of $\eta$, and $\Fc(G_\e)(\xi)=\exp(-2\pi^2 \e^2 |\xi|^2)$, hence 
\begin{align*}
    |\eta_{\e}|_{2-\lambda}=& | (1+|\xi|^2)^{(2-\lambda)/2} \Fc( \eta) \Fc(G_\e))|_{L^2}  \\
    \leq& M \left|  (1+|\xi|^2)^{(2-\lambda)/2}\exp(-2\pi^2 \e^2 \xi^2) \right|_{L^2} < +\infty,
\end{align*}
i.e. $\eta_\e \in H^{2-\lambda}(\R^d)$.

Moreover, we claim that 
for each $i, j = 1,\cdots,d$,
\begin{align*}
 D_{x_ix_j}^2 \Jc_{\lambda} \eta_{\e} \longrightarrow D_{x_ix_j}^2 \Jc_{\lambda}\eta, \quad \text{ as $\e \longrightarrow 0$} 
\end{align*}
in $L^\infty$-norm, and $\eta_{\e} \to \eta$ in the topology of weak convergence. 
Indeed, $\Jc_{\lambda} \eta_{\e} \in H^2(\mathbb R^d) \cap C^2(\mathbb R^d)$, and due to the explicit formula 
\begin{align*}
D_{x_ix_j}^2 \Jc_{\lambda} \eta_{\e}(x)= - (2\pi)^{-d/2}\int e^{i x \cdot \xi}\xi_i\xi_j   \frac{\Fc( \eta)(\xi)\Fc( G_{\e})(\xi)}{(1+|\xi|^2)^{\lambda/2}} \, d\xi,
\end{align*}
we have that 
\begin{align*}
  \left|D_{x_ix_j}^2 \Jc_{\lambda} \eta_{\e}(x)- D_{x_ix_j}^2 \Jc_{\lambda} \eta(x) \right|   \leq  M \left|\int  \frac{|\Fc( G_{\e})(\xi)-1|}{(1+|\xi|^2)^{\lambda/2-1}} \, d\xi \right| \longrightarrow 0. 
\end{align*}
Noting that for any $h \in C_b(\R^d)$, we have that
\begin{align*}
    \int_{R^d}h(x)(\eta_\e(x)dx - \eta(dx))
    ~ = &~
    \int_{\R^d} h(x)\int_{\R^d}G_\e(x-y)\,\eta(dy)\,dx -  \int_{R^d}h(x)\,\eta(dx)
    \\~ = &~
    \int_{\R^d}\int_{\R^d}h(x)G_\e(x-y)\,dx\,\eta(dy) - \int_{R^d}h(y)\,\eta(dy)
    \\~ = &~
    \int_{\R^d} \big(h*G_\e(y) - h(y)\big)\, \eta(dy)
    \longrightarrow 0,
    ~\mbox{as}~
    \e \longrightarrow 0,
\end{align*}
by dominated convergence theorem.
Thus, we have the following estimation 
\begin{align*}
    & ~ 
    \bigg|\int_{\R^d} \Ac(\Jc_{\lambda} \eta_{\e})(x) \eta_\e(x)dx - \int_{\R^d} \Ac (\Jc_{\lambda} \eta)(x) \eta(dx)\bigg|
    \\ ~ \le & ~
    \bigg|\int_{\R^d} \big(\Ac(\Jc_{\lambda} \eta_{\e})(x)  - \Ac (\Jc_{\lambda} \eta)(x)\big) \eta_\e(x)dx\bigg|
    +
    \bigg|\int_{\R^d} \Ac(\Jc_{\lambda} \eta)(x) \big(\eta_\e(x)dx- \eta(dx)\big)\bigg|
    \\ ~ \le & ~
    \int_{\R^d} \frac{1}{2}|Tr\big(a(x)\big((D^2_x\Jc_{\lambda} \eta_{\e})(x)  - (D^2_x\Jc_{\lambda} \eta)(x)\big)\big)||\eta_\e(x)|dx
    +
    \bigg|\int_{\R^d} \Ac(\Jc_{\lambda} \eta)(x) \big(\eta_\e(x)dx- \eta(dx)\big)\bigg|
    \\ ~ \le & ~
    \frac{M}{2}\sum_{i,j = 1}^d |a_{j,i}(x)|\Big|D_{x_ix_j}^2 \Jc_{\lambda} \eta_{\e}(x) \Big|
    +
    \bigg|\int_{\R^d} \Ac(\Jc_{\lambda} \eta)(x) \big(\eta_\e(x)dx- \eta(dx)\big)\bigg|
    \\ ~ \le & ~
    \frac{M^2d^2}{2}\sup_{i,j}|a_{i,j}|_{L^{\infty}}  \left|\int  \frac{|\Fc (G_{\e})(\xi)-1|}{(1+|\xi|^2)^{\lambda/2-1}} \, d\xi \right|
    + \bigg|\int_{\R^d} \Ac(\Jc_{\lambda} \eta)(x) \big(\eta_\e(x)dx- \eta(dx)\big)\bigg|
    \longrightarrow 0, 
\end{align*}
as $\e \longrightarrow 0$.
Similarly, we have
\begin{align*}
    \bigg|\int_{\R^d} \Bc(\Jc_{\lambda} \eta_{\e})(x) \eta_\e(x)dx - \int_{\R^d} \Bc (\Jc_{\lambda} \eta)(x) \eta(dx)\bigg|
    \longrightarrow 0, 
    ~\mbox{as}~
    \e \longrightarrow 0.
\end{align*}

\end{proof}

\subsection{Proof of Theorem~\ref{thm:viscosity_property}}\label{Proof:thm:viscosity}

Assuming that $b,\sigma,\tilde \sigma, r,l$ are bounded and Lipschitz, we can prove that $(t,\mu) \mapsto v(t,\mu)$ is continuous as in \cite{BaEkZh23}. 
By an argument parallel to that of Proposition 6.3 in \cite{DaJaSe23}, the value function is a viscosity solution to \eqref{eq:filtering}. 

For uniqueness, we verify that the Hamiltonian 
$$G^e: \Pc_2(\mathbb R^d) \x \mathbb R^d \times B_l^d \x B_l^{d \x d} \x \S_d \longrightarrow \mathbb R; \, (\mu,m,p,q,M) \mapsto \inf_{a \in A} K^e(a,\mu,m,p,q,M)$$ satisfies Assumption~\ref{ass:comparison},
where $K^e(a,\mu,m,p,q,M):= K(a,(I_d + m)_\sharp\mu,p(\cdot-m),q(\cdot - m),M).$

Due to the inequality 
\begin{align*}
  \left| G^e(\mu,m,p_1,q_1,M_1)- G^e(\mu,m,p_2,q_2,M_2)  \right| \leq \sup_{a \in A} \left| K^e(a, \mu,m, p_1,q_1,M_1) - K^e(a,\mu,m,P_2,q_2,M_2) \right|, 
\end{align*}
for Assumption~\ref{ass:comparison} (i), it suffices to show that for any fixed $a$
\begin{align}\label{eq:filtering1}
   & \left| K^e(a, \mu,m, p_1,q_1,M_1) - K^e(a,\mu,m,p_2,q_2,M_2) \right|  \\
    & \leq L_G \bigg(1 + |m| +
            \int_{\R^d}|x| \, \mu(dx)\bigg)
             \times \Big(|p_1 - p_2|_l + |q_1 - q_2|_l  + \|M_1 - M_2\| \Big), \notag
\end{align}
where $L_G$ is some positive constant independent of $a$. 
Indeed from the construction of $K$, it is straightforward that 
\begin{align*}
    & \left| K(a, (I_d+m)_{\#}\mu, p_1(\cdot -m),q_1(\cdot -m),M_1) - K(a,(I_d+m)_{\#}\mu,p_2(\cdot -m),q_2(\cdot - m),M_2) \right|  \\
    & \leq  C \left( \int_{\R^d} |p_1(x)-p_2(x)| + |q_1(x)-q_2(x)| \, \mu(dx) + ||M_1-M_2|| \right)  \\
    & \leq C \left( \bigg(1 + 
            \int_{\R^d}|x| \, \mu(dx)\bigg)
             \times \Big(|p_1 - p_2|_l + |q_1 - q_2|_l \Big)+||M_1-M_2||\right),
\end{align*}
and hence verifies \eqref{eq:filtering1}.

For Assumption \eqref{ass:comparison} (ii), recalling and defining that 
\begin{align*}
      \kappa(x)
      ~ = ~
      \frac{1}{\e}\int_{\R^d}
            \frac{Re(F_k(\mu - \nu)f^*_k(x))}
            {(1 + |k|^2)^\lambda}
            dk,
    \quad
    \mu_m := (I_d + m)_\sharp\mu,
\end{align*}
and $M \mapsto K(a,\mu,p,q,M)$ is increasing,  it is enough to estimate 
\begin{align*}
    & K(a, \mu_m, \nabla \kappa(\cdot -m), \nabla^2 \kappa (\cdot -m), M) - K(a,\nu_n, \nabla \kappa (\cdot -n), \nabla^2 \kappa (\cdot -n), M)  \\
     &\leq K(a, \mu_m, \nabla \kappa(\cdot -m), \nabla^2 \kappa (\cdot -m), M) - K(a,\nu_m, \nabla \kappa (\cdot -m), \nabla^2 \kappa (\cdot -m), M) \\
    & \ \ \ + K(a,\nu_m, \nabla \kappa (\cdot -m), \nabla^2 \kappa (\cdot -m), M) - K(a,\nu_n, \nabla \kappa (\cdot -n), \nabla^2 \kappa (\cdot -n), M).
    \end{align*}
  According to the definition of $K$, the first term on the right hand side is bounded by 
    \begin{equation}\label{eq:final1}
        \begin{split}
        &  \int_{\R^d} r(x+m,\mu_m, a)\,\mu(dx) - \int_{\R^d} r(x+m,\nu_m,a)\,\nu(dx)
        \\ &+\int_{\R^d} b(x+m,\mu_m, a)^\top \nabla \kappa(x)\, \mu(dx) -
            \int_{\R^d} b(x+m,\nu_m, a)^\top \nabla \kappa(x)\, \nu(dx) 
        \\
        &+\frac{1}{2} \int_{\R^d}Tr\left(\nabla^2 \kappa(x) \sigma \sigma^\top (x+m,a) \right)\, (\mu - \nu)(dx)  \\
         \le&  \int_{\R^d} \big(r(x+m,\mu_m, a) -r(x+m,\nu_m,a)\big)\,\mu(dx) + \int_{\R^d} r(x+m,\nu_m,a)\,(\mu - \nu)(dx)
        \\ &+\int_{\R^d} \big(b(x+m,\mu_m, a) - b(x+m,\nu_m, a)
        \big)^\top \nabla \kappa(x)\, \mu(dx) 
        \\& + \int_{\R^d} b(x+m,\nu_m, a)^\top \nabla \kappa(x) (\mu - \nu)(dx) 
            + \frac{1}{2} \int_{\R^d}\text{Tr}\left(\nabla^2 \kappa(x) \sigma \sigma^\top (x+m,\nu_m,a) \right) (\mu - \nu)(dx)
        \\
        &+\frac{1}{2} \int_{\R^d}\text{Tr}\Big(\nabla^2 \kappa(x) 
        \big(\sigma \sigma^\top (x+m,\mu_m,a) - \sigma \sigma^\top (x+m,\nu_m,a)\big)\Big)\, \mu(dx) \\
         \leq&~ C |\mu_m -\nu_m|_{-\lambda} 
         + L|\mu_m -\nu_m|_{-\lambda}(|\nabla\kappa|_{L^\infty} + |\nabla^2\kappa|_{L^\infty})
         -\frac{\d}{4\e} |\mu-\nu|_{1-\lambda}^2+\frac{C}{\e}|\mu-\nu|_{-\lambda}^2, 
        \end{split}        
    \end{equation}
where we apply Proposition~\ref{prop:commutator} with $\eta=\mu-\nu$ together with the equality $\kappa= \frac{1}{\e} \mathcal{J}_{2\lambda} \eta$. 
By direct computation, we get that
$$
    |\mu_m - \nu_m|^2_{-\lambda} 
    =
    \int_{\mathbb R^d} \frac{|F_{k}(\mu-\nu)e^{ik\cdot m}|^2}{(1+|k|^2)^{\lambda}} \, dk
    =
    \int_{\mathbb R^d} \frac{|F_{k}(\mu-\nu)|^2}{(1+|k|^2)^{\lambda}} \, dk
    =
    |\mu - \nu|^2_{-\lambda}.
$$

Using the Lipschitz property of $r,b, \sigma\sigma^\top$, the second term is bounded by 
\begin{align*}
     & \int_{\R^d} \big(r(x+m,\nu_m,a)-r(x+n,\nu_n,a)\big) \, \nu(dx)
     \\ & +\int_{\R^d} (b(x+m,\nu_m,a)^\top-b(x+n,\nu_n,a)^\top) \nabla \kappa(x) \, \nu(dx)
     \\ & + \int \frac{1}{2} Tr\left(\nabla^2 \kappa(x)  (\sigma \sigma^\top (x+m,\nu_m,a) -\sigma \sigma^\top (x+n,\nu_n,a))\right)\,  \nu(dx)
     \\ \le &~ C(|m-n| + |\nu_m - \nu_n|_{-\lambda}) + C(|m-n|+|\nu_m - \nu_n|_{-\lambda})(|\nabla\kappa|_{L^{\infty}}+|\nabla^2\kappa|_{L^{\infty} }).
\end{align*}
By direct computation, we get that 
\begin{align*}
    |\nu_m - \nu_n|^2_{-\lambda}
    = &
    \int_{\mathbb R^d} \frac{(2\pi)^{-d/2}|\langle e^{ik\cdot(x+m)} - e^{ik\cdot (x+n)},\nu\rangle|^2}{(1+|k|^2)^{\lambda}} \, dk
    \\
    \le &
    \int_{\mathbb R^d} \frac{(2\pi)^{-d/2}|k|^2|m-n|^2}{(1+|k|^2)^{\lambda}} \, dk
    \le C|m - n|^2,
    \\
    |\nabla \kappa (x) | \leq& \frac{1}{\e} \int_{\mathbb R^d} \frac{|k F_k(\mu-\nu)|}{(1+|k|^2)^{\lambda}} \, dk \leq \frac{\rho_F(\mu,\nu)}{\e} \sqrt{\int_{\mathbb R^d} \frac{|k|^2}{(1+|k|^2)^{\lambda}} \,dk}=\frac{C\rho_F(\mu,\nu)}{\e}, 
\end{align*}
and similarly $ |\nabla^2 \kappa (x) |\leq \frac{C\rho_F(\mu,\nu)}{\e}$. Therefore, we get the bound $L\left(d_F(\theta,\iota)+\frac{d_F(\theta,\iota))^2}{\e} \right)$ where $\theta=(\mu,m)$ and $\iota=(\nu,n)$. Together with \eqref{eq:final1}, this verifies Assumption~\ref{assumption1} (ii).

\endproof

\bibliography{references}

@article{WDPP,
address = {Philadelphia, PA},
copyright = {2015 INIST-CNRS},
issn = {0363-0129},
journal = {SIAM journal on control and optimization},
keywords = {Differential equations Partial ;  Dynamic programming ;  Markov processes ;  Mathematical analysis ;  Mathematics ;  Programming (Mathematics) ;  Stochastic control theory ;  Viscosity},
language = {eng},
number = {3},
pages = {948-962},
publisher = {Society for Industrial and Applied Mathematics},
title = {Weak Dynamic Programming Principle for Viscosity Solutions},
volume = {49},
year = {2011},
abstract = {We prove a weak version of the dynamic programming principle for standard stochastic control problems and mixed control-stopping problems, which avoids the technical difficulties related to the measurable selection argument. In the Markov case, our result is tailor-made for the derivation of the dynamic programming equation in the sense of viscosity solutions. [PUBLICATION ABSTRACT]},
author = {Bouchard, Bruno and Touzi, Nizar},
}

@article {MR4253765,
    AUTHOR = {Bayraktar, Erhan and Ekren, Ibrahim and Zhang, Xin},
     TITLE = {Prediction against a limited adversary},
   JOURNAL = {J. Mach. Learn. Res.},
  FJOURNAL = {Journal of Machine Learning Research (JMLR)},
    VOLUME = {22},
      YEAR = {2021},
     PAGES = {Paper No. 72, 33},
      ISSN = {1532-4435,1533-7928},
   MRCLASS = {91B06 (62C25 62L10)},
  MRNUMBER = {4253765},
}

@article{10.1214/21-AOP1548,
	author = {Mao Fabrice Djete and Dylan Possama{\"\i} and Xiaolu Tan},
	date-added = {2022-05-31 17:00:55 +0200},
	date-modified = {2022-05-31 17:00:55 +0200},
	doi = {10.1214/21-AOP1548},
	journal = {The Annals of Probability},
	keywords = {dynamic programming principle, McKean--Vlasov optimal control, measurable selection},
	number = {2},
	pages = {791 -- 833},
	publisher = {Institute of Mathematical Statistics},
	title = {{McKean--Vlasov optimal control: The dynamic programming principle}},
	url = {https://doi.org/10.1214/21-AOP1548},
	volume = {50},
	year = {2022},
	bdsk-url-1 = {https://doi.org/10.1214/21-AOP1548}}

@article {MR4595996,
    AUTHOR = {Bayraktar, Erhan and Cecchin, Alekos and Chakraborty, Prakash},
     TITLE = {Mean field control and finite agent approximation for
              regime-switching jump diffusions},
   JOURNAL = {Appl. Math. Optim.},
  FJOURNAL = {Applied Mathematics and Optimization},
    VOLUME = {88},
      YEAR = {2023},
    NUMBER = {2},
     PAGES = {Paper No. 36, 35},
      ISSN = {0095-4616,1432-0606},
   MRCLASS = {49N80 (35D40 35F21 35Q91 49L25 60H30)},
  MRNUMBER = {4595996},
       DOI = {10.1007/s00245-023-10015-3},
       URL = {https://doi.org/10.1007/s00245-023-10015-3},
}

@article {MR3739204,
    AUTHOR = {Bayraktar, Erhan and Cosso, Andrea and Pham, Huy\^{e}n},
     TITLE = {Randomized dynamic programming principle and {F}eynman-{K}ac
              representation for optimal control of {M}c{K}ean-{V}lasov
              dynamics},
   JOURNAL = {Trans. Amer. Math. Soc.},
  FJOURNAL = {Transactions of the American Mathematical Society},
    VOLUME = {370},
      YEAR = {2018},
    NUMBER = {3},
     PAGES = {2115--2160},
      ISSN = {0002-9947,1088-6850},
   MRCLASS = {49L20 (60H10 60H30 60K35 93E20)},
  MRNUMBER = {3739204},
MRREVIEWER = {Salvatore\ Federico},
       DOI = {10.1090/tran/7118},
       URL = {https://doi.org/10.1090/tran/7118},
}

@article{bandini2019randomized,
	abstract = {We study a stochastic optimal control problem for a partially observed diffusion. By using the control randomization method in Bandini et al. (2018), we prove a corresponding randomized dynamic programming principle (DPP) for the value function, which is obtained from a flow property of an associated filter process. This DPP is the key step towards our main result: a characterization of the value function of the partial observation control problem as the unique viscosity solution to the corresponding dynamic programming Hamilton--Jacobi--Bellman (HJB) equation. The latter is formulated as a new, fully non linear partial differential equation on the Wasserstein space of probability measures. An important feature of our approach is that it does not require any non-degeneracy condition on the diffusion coefficient, and no condition is imposed to guarantee existence of a density for the filter process solution to the controlled Zakai equation. Finally, we give an explicit solution to our HJB equation in the case of a partially observed non Gaussian linear--quadratic model.},
	author = {Elena Bandini and Andrea Cosso and Marco Fuhrman and Huy{\^e}n Pham},
	date-added = {2021-05-04 21:40:39 -0700},
	date-modified = {2021-05-04 21:41:01 -0700},
	doi = {https://doi.org/10.1016/j.spa.2018.03.014},
	issn = {0304-4149},
	journal = {Stochastic Processes and their Applications},
	keywords = {Partial observation control problem, Randomization of controls, Dynamic programming principle, Bellman equation, Wasserstein space, Viscosity solutions},
	number = {2},
	pages = {674-711},
	title = {Randomized filtering and Bellman equation in Wasserstein space for partial observation control problem},
	url = {https://www.sciencedirect.com/science/article/pii/S0304414918300553},
	volume = {129},
	year = {2019},
	bdsk-url-1 = {https://www.sciencedirect.com/science/article/pii/S0304414918300553},
	bdsk-url-2 = {https://doi.org/10.1016/j.spa.2018.03.014}}

@article{JMLR:v24:22-1001,
  author  = {Erhan Bayraktar and Ibrahim Ekren and Xin Zhang},
  title   = {A {PDE} approach for regret bounds under partial monitoring},
  journal = {Journal of Machine Learning Research},
  year    = {2023},
  volume  = {24},
  number  = {299},
  pages   = {1--24},
  url     = {http://jmlr.org/papers/v24/22-1001.html}
}

@book{cdll2019,
	author = {Cardaliaguet, Pierre and Delarue, Fran{\c{c}}ois and Lasry, Jean-Michel and Lions, Pierre-Louis},
	publisher = {Princeton University Press},
	title = {The Master Equation and the Convergence Problem in Mean Field Games:(AMS-201)},
	volume = {201},
	year = {2019}}

@article{burzoni2020viscosity,
	author = {Burzoni, Matteo and Ignazio, Vincenzo and Reppen, A Max and Soner, H Mete},
	journal = {SIAM Journal on Control and Optimization},
	number = {3},
	pages = {1676--1699},
	publisher = {SIAM},
	title = {Viscosity solutions for controlled McKean--Vlasov jump-diffusions},
	volume = {58},
	year = {2020}}

@article{2020arXiv200813703C,
	adsnote = {Provided by the SAO/NASA Astrophysics Data System},
	adsurl = {https://ui.adsabs.harvard.edu/abs/2020arXiv200813703C},
	archiveprefix = {arXiv},
	author = {{Calder}, Jeff and {Drenska}, Nadejda},
	date-added = {2020-10-04 10:26:09 -0400},
	date-modified = {2020-10-30 09:01:26 -0400},
	eid = {arXiv:2008.13703},
	eprint = {2008.13703},
	journal = {To appear in Journal of Fourier Analysis and Applications Special Issue on Harmonic Analysis on Graphs},
	keywords = {Mathematics - Optimization and Control, Computer Science - Computer Science and Game Theory, Computer Science - Machine Learning, Mathematics - Analysis of PDEs, 35D40, 49L25},
	month = aug,
	primaryclass = {math.OC},
	title = {{Asymptotically optimal strategies for online prediction with history-dependent experts}},
	year = 2020}

@article{MR4053484,
	author = {Drenska, Nadejda and Kohn, Robert V.},
	date-added = {2020-08-23 21:48:58 -0700},
	date-modified = {2020-08-23 21:48:58 -0700},
	doi = {10.1007/s00332-019-09570-3},
	fjournal = {Journal of Nonlinear Science},
	issn = {0938-8974},
	journal = {J. Nonlinear Sci.},
	mrclass = {91A35 (35D40 35K55 60G25 90C39 91A05 91A80)},
	mrnumber = {4053484},
	number = {1},
	pages = {137--173},
	title = {Prediction with expert advice: a {PDE} perspective},
	url = {https://doi.org/10.1007/s00332-019-09570-3},
	volume = {30},
	year = {2020},
	bdsk-url-1 = {https://doi.org/10.1007/s00332-019-09570-3}}

@article{MR4120922,
	author = {Bayraktar, Erhan and Ekren, Ibrahim and Zhang, Xin},
	date-added = {2020-08-23 21:48:37 -0700},
	date-modified = {2020-08-23 21:48:37 -0700},
	doi = {10.1080/03605302.2020.1712418},
	fjournal = {Communications in Partial Differential Equations},
	issn = {0360-5302},
	journal = {Comm. Partial Differential Equations},
	mrclass = {91A80 (35K55 35K65 35Q91 68T05 91A05)},
	mrnumber = {4120922},
	number = {7},
	pages = {714--757},
	title = {Finite-time 4-expert prediction problem},
	url = {https://doi.org/10.1080/03605302.2020.1712418},
	volume = {45},
	year = {2020},
	bdsk-url-1 = {https://doi.org/10.1080/03605302.2020.1712418}}

@book {MR3752669,
    AUTHOR = {Carmona, Ren\'{e} and Delarue, Fran\c{c}ois},
     TITLE = {Probabilistic theory of mean field games with applications.
              {I}},
    SERIES = {Probability Theory and Stochastic Modelling},
    VOLUME = {83},
      NOTE = {Mean field FBSDEs, control, and games},
 PUBLISHER = {Springer, Cham},
      YEAR = {2018},
     PAGES = {xxv+713},
      ISBN = {978-3-319-56437-1; 978-3-319-58920-6},
   MRCLASS = {60-02 (35R60 49N70 49N90 60H15 60H30 91A15 93E20)},
  MRNUMBER = {3752669},
MRREVIEWER = {Vassili N. Kolokol\cprime tsov},
}

@article {MR4604196,
    AUTHOR = {Talbi, Mehdi and Touzi, Nizar and Zhang, Jianfeng},
     TITLE = {Viscosity solutions for obstacle problems on {W}asserstein
              space},
   JOURNAL = {SIAM J. Control Optim.},
  FJOURNAL = {SIAM Journal on Control and Optimization},
    VOLUME = {61},
      YEAR = {2023},
    NUMBER = {3},
     PAGES = {1712--1736},
      ISSN = {0363-0129,1095-7138},
   MRCLASS = {60G40 (35Q89 49L25 49N80 60H30)},
  MRNUMBER = {4604196},
       DOI = {10.1137/22M1488119},
       URL = {https://doi.org/10.1137/22M1488119},
}

@article {MR4613226,
    AUTHOR = {Talbi, Mehdi and Touzi, Nizar and Zhang, Jianfeng},
     TITLE = {Dynamic programming equation for the mean field optimal
              stopping problem},
   JOURNAL = {SIAM J. Control Optim.},
  FJOURNAL = {SIAM Journal on Control and Optimization},
    VOLUME = {61},
      YEAR = {2023},
    NUMBER = {4},
     PAGES = {2140--2164},
      ISSN = {0363-0129,1095-7138},
   MRCLASS = {60G40 (35Q89 49N80 60H30)},
  MRNUMBER = {4613226},
       DOI = {10.1137/21M1404259},
       URL = {https://doi.org/10.1137/21M1404259},
}

@ARTICLE{2023arXiv230709278P,
       author = {{Possama{\"\i}}, Dylan and {Talbi}, Mehdi},
        title = "{Mean-field games of optimal stopping: master equation and weak equilibria}",
      journal = {arXiv e-prints},
     keywords = {Mathematics - Probability, Mathematics - Optimization and Control},
         year = 2023,
        month = jul,
          eid = {arXiv:2307.09278},
        pages = {arXiv:2307.09278},
          doi = {10.48550/arXiv.2307.09278},
archivePrefix = {arXiv},
       eprint = {2307.09278},
 primaryClass = {math.PR},
       adsurl = {https://ui.adsabs.harvard.edu/abs/2023arXiv230709278P},
      adsnote = {Provided by the SAO/NASA Astrophysics Data System}
}

@book {MR3753660,
    AUTHOR = {Carmona, Ren\'{e} and Delarue, Fran\c{c}ois},
     TITLE = {Probabilistic theory of mean field games with applications.
              {II}},
    SERIES = {Probability Theory and Stochastic Modelling},
    VOLUME = {84},
      NOTE = {Mean field games with common noise and master equations},
 PUBLISHER = {Springer, Cham},
      YEAR = {2018},
     PAGES = {xxiv+697},
      ISBN = {978-3-319-56435-7; 978-3-319-56436-4},
   MRCLASS = {60-02 (35R60 49L20 60G55 60H10 60H30 91A13 91A15)},
  MRNUMBER = {3753660},
MRREVIEWER = {Vassili\ N.\ Kolokol\cprime tsov},
}

@article {MR4499277,
    AUTHOR = {Gangbo, Wilfrid and M\'{e}sz\'{a}ros, Alp\'{a}r R. and Mou,
              Chenchen and Zhang, Jianfeng},
     TITLE = {Mean field games master equations with nonseparable
              {H}amiltonians and displacement monotonicity},
   JOURNAL = {Ann. Probab.},
  FJOURNAL = {The Annals of Probability},
    VOLUME = {50},
      YEAR = {2022},
    NUMBER = {6},
     PAGES = {2178--2217},
      ISSN = {0091-1798,2168-894X},
   MRCLASS = {35Q49 (49N80 49Q22 60H30 91A16 93E20)},
  MRNUMBER = {4499277},
       DOI = {10.1214/22-aop1580},
       URL = {https://doi.org/10.1214/22-aop1580},
}

@book{Gr14,
abstract = {This text is addressed to graduate students in mathematics and to interested researchers who wish to acquire an in depth understanding of Euclidean Harmonic analysis. The text covers modern topics and techniques in function spaces, atomic decompositions, singular integrals of nonconvolution type, and the boundedness and convergence of Fourier series and integrals. The exposition and style are designed to stimulate further study and promote research. Historical information and references are included at the end of each chapter. This third edition includes a new chapter entitled "Multilinear Harmonic Analysis" which focuses on topics related to multilinear operators and their applications. Sections 1.1 and 1.2 are also new in this edition. Numerous corrections have been made to the text from the previous editions and several improvements have been incorporated, such as the adoption of clear and elegant statements. A few more exercises have been added with relevant hints when necessary. Reviews from the Second Edition: "The books cover a large amount of mathematics. They are certainly a valuable and useful addition to the existing literature and can serve as textbooks or as reference books. Students will especially appreciate the extensive collection of exercises." --Andreas Seeger, Mathematical Reviews "The exercises at the end of each section supplement the material of the section nicely and provide a good chance to develop additional intuition and deeper comprehension. The historical notes in each chapter are intended to provide an account of past research as well as to suggest directions for further investigation. The volume is mainly addressed to graduate students who wish to study harmonic analysis." --Leonid Golinskii, zbMATH.},
author = {Grafakos, Loukas. },
address = {New York},
booktitle = {Modern Fourier analysis},
edition = {Third edition.},
isbn = {9781493912292},
keywords = {Fourier analysis ; Harmonische Analyse ; Fourieranalys ; Analyse de Fourier},
language = {eng},
lccn = {2014942843},
publisher = {Springer},
series = {Graduate texts in mathematics ; 250},
title = {Modern Fourier analysis },
year = {2014},
}

@book{AdHe96,
abstract = {Function spaces, especially those spaces that have become known as Sobolev spaces, and their natural extensions, are now a central concept in analysis. In particular, they play a decisive role in the modem theory of partial differential equations (PDE). Potential theory, which grew out of the theory of the electrostatic or gravita­ tional potential, the Laplace equation, the Dirichlet problem, etc. , had a fundamen­ tal role in the development of functional analysis and the theory of Hilbert space. Later, potential theory was strongly influenced by functional analysis. More re­ cently, ideas from potential theory have enriched the theory of those more general function spaces that appear naturally in the study of nonlinear partial differential equations. This book is motivated by the latter development. The connection between potential theory and the theory of Hilbert spaces can be traced back to C. F. Gauss [181], who proved (with modem rigor supplied almost a century later by O. Frostman [158]) the existence of equilibrium potentials by minimizing a quadratic integral, the energy. This theme is pervasive in the work of such mathematicians as D. Hilbert, Ch. -J. de La Vallee Poussin, M. Riesz, O. Frostman, A. Beurling, and the connection was made particularly clear in the work of H. Cartan [97] in the 1940's. In the thesis of J. Deny [119], and in the subsequent work of J. Deny and J. L.},
author = {Adams, David R. and Hedberg, Lars Inge},
address = {Berlin ;},
booktitle = {Function spaces and potential theory},
isbn = {3540570608},
keywords = {Function spaces ; Potential theory (Mathematics) ; Funktionenraum ; Potenzialtheorie ; Potentiel théorie du ; Funksjonalrom ; Potensialteori ; Espaces fonctionnels ; Théorie du potentiel ; funksjonsanalyse ; funksjonsrom ; sobolev ; capacities},
language = {eng},
lccn = {95023396},
publisher = {Springer},
series = {Grundlehren der mathematischen Wissenschaften ; 314},
title = {Function spaces and potential theory },
year = {1996},
}

@ARTICLE{DaJaSe23,
       author = {{Daudin}, Samuel and {Jackson}, Joe and {Seeger}, Benjamin},
        title = "{Well-posedness of Hamilton-Jacobi equations in the Wasserstein space: non-convex Hamiltonians and common noise}",
      journal = {To appear in Communications in Partial Differential Equations.}
}

@article{behzadan2021multiplication,
  title={Multiplication in Sobolev spaces, revisited},
  author={Behzadan, Ali and Holst, Michael},
  journal={Arkiv f{\"o}r Matematik},
  volume={59},
  number={2},
  pages={275--306},
  year={2021},
  publisher={Lehigh University Bethlehem, Penn., USA}
}

@ARTICLE{STZ24,
	title={Controlled Occupied Processes and Viscosity Solutions},
	author={Soner, H Mete and Tissot-Daguette, Valentin and Zhang, Jianfeng},
	journal={arXiv preprint arXiv:2411.12080},
	year={2024}
}

@ARTICLE{BeLi24,
	author = {{Bertucci}, Charles and {Lions}, Pierre Louis},
	title = "{{An approximation of the squared Wasserstein distance and an application to Hamilton--Jacobi equations}}",
	journal = {arXiv:2409.11793},
	keywords = {Mathematics - Analysis of PDEs, Mathematics - Optimization and Control, Mathematics - Probability},
	year = 2024,
}

@ARTICLE{CDJM24,
	title={Quantitative convergence for mean field control with common noise and degenerate idiosyncratic noise},
	author={Cecchin, Alekos and Daudin, Samuel and Jackson, Joe and Martini, Mattia},
	journal={arXiv preprint arXiv:2409.14053},
	year={2024}
}

@article{2022arXiv221016004T,
	author = {Mehdi Talbi and Nizar Touzi and Jianfeng Zhang},
	title = {{From finite population optimal stopping to mean field optimal stopping}},
	volume = {34},
	journal = {The Annals of Applied Probability},
	number = {5},
	publisher = {Institute of Mathematical Statistics},
	pages = {4237 -- 4267},
	keywords = {Mean field optimal stopping, obstacle problems, propagation of chaos, viscosity solutions},
	year = {2024}, 
}

@ARTICLE{2022arXiv220314554C,
	title={{An algebraic convergence rate for the optimal control of McKean--Vlasov dynamics}},
	author={Cardaliaguet, Pierre and Daudin, Samuel and Jackson, Joe and Souganidis, Panagiotis E},
	journal={SIAM Journal on Control and Optimization},
	volume={61},
	number={6},
	pages={3341--3369},
	year={2023},
	publisher={SIAM}
}

@ARTICLE{2023arXiv231211373C,
	title={Sharp convergence rates for mean field control in the region of strong regularity},
	author={Cardaliaguet, Pierre and Jackson, Joe and Mimikos-Stamatopoulos, Nikiforos and Souganidis, Panagiotis E},
	journal={arXiv preprint arXiv:2312.11373},
	year={2023}
}

@article{2023arXiv230508423D,
	title = {On the optimal rate for the convergence problem in mean field control},
	journal = {Journal of Functional Analysis},
	volume = {287},
	number = {12},
	pages = {110660},
	year = {2024},
	issn = {0022-1236},
	author = {Samuel Daudin and François Delarue and Joe Jackson},
	keywords = {Mean field control, Convergence, Hamilton-Jacobi equation, Viscosity solutions}
}

@article {MR4507678,
	title={{Rate of convergence for particle approximation of PDEs in Wasserstein space}},
	author={Germain, Maximilien and Pham, Huy{\^e}n and Warin, Xavier},
	journal={Journal of Applied Probability},
	volume={59},
	number={4},
	pages={992--1008},
	year={2022},
	publisher={Cambridge University Press}
}

@ARTICLE{BaEkZh23,
  title={Comparison of viscosity solutions for a class of second-order PDEs on the Wasserstein space},
  author={Bayraktar, Erhan and Ekren, Ibrahim and Zhang, Xin},
  journal={Communications in Partial Differential Equations},
  volume={50},
  number={4},
  pages={570--613},
  year={2025},
  publisher={Taylor \& Francis}
}

@article{chow2019partial,
	author = {Chow, Yat Tin and Gangbo, Wilfrid},
	journal = {Journal of Differential Equations},
	number = {10},
	pages = {6065--6117},
	publisher = {Elsevier},
	title = {A partial Laplacian as an infinitesimal generator on the Wasserstein space},
	volume = {267},
	year = {2019}}

@article{martini2023kolmogorov,
	title={Kolmogorov equations on spaces of measures associated to nonlinear filtering processes},
	author={Martini, Mattia},
	journal={Stochastic Processes and their Applications},
	volume={161},
	pages={385--423},
	year={2023},
	publisher={Elsevier}
}

@article{usersguide,
	author = {Crandall, Michael G. and Ishii, Hitoshi and Lions, Pierre-Louis},
	date-added = {2020-08-23 21:57:12 -0700},
	date-modified = {2020-08-23 21:57:12 -0700},
	doi = {10.1090/S0273-0979-1992-00266-5},
	fjournal = {American Mathematical Society. Bulletin. New Series},
	issn = {0273-0979},
	journal = {Bull. Amer. Math. Soc. (N.S.)},
	mrclass = {35J60 (35B05 35D05 35G20)},
	mrnumber = {1118699},
	mrreviewer = {P. Szeptycki},
	number = {1},
	pages = {1--67},
	title = {User's guide to viscosity solutions of second order partial differential equations},
	url = {https://doi.org/10.1090/S0273-0979-1992-00266-5},
	volume = {27},
	year = {1992},
	bdsk-url-1 = {https://doi.org/10.1090/S0273-0979-1992-00266-5}}

@article {SoYa24,
	AUTHOR = {Soner, H. Mete and Yan, Qinxin},
	TITLE = {Viscosity solutions for {M}c{K}ean-{V}lasov control on a
	torus},
	JOURNAL = {SIAM J. Control Optim.},
	FJOURNAL = {SIAM Journal on Control and Optimization},
	VOLUME = {62},
	YEAR = {2024},
	NUMBER = {2},
	PAGES = {903--923},
	ISSN = {0363-0129,1095-7138},
	MRCLASS = {49N90 (35Q83 35Q89 49L25 60G99)},
	MRNUMBER = {4712815},
	DOI = {10.1137/22M1543732},
	URL = {https://doi.org/10.1137/22M1543732},
}

@article{2020arXiv200308457B,
	author = {Bayraktar, Erhan and Poor, H. Vincent and Zhang, Xin},
	doi = {10.1109/TIT.2020.3025866},
	fjournal = {Institute of Electrical and Electronics Engineers. Transactions on Information Theory},
	issn = {0018-9448,1557-9654},
	journal = {IEEE Trans. Inform. Theory},
	mrclass = {93E20 (49L25 62B10 62M20)},
	mrnumber = {4231972},
	number = {1},
	pages = {559--565},
	title = {Malicious experts versus the multiplicative weights algorithm in online prediction},
	url = {https://doi.org/10.1109/TIT.2020.3025866},
	volume = {67},
	year = {2021},
	bdsk-url-1 = {https://doi.org/10.1109/TIT.2020.3025866}}

@article{2019arXiv190202368B,
	author = {Bayraktar, Erhan and Ekren, Ibrahim and Zhang, Yili},
	doi = {10.1214/20-AAP1565},
	fjournal = {The Annals of Applied Probability},
	issn = {1050-5164,2168-8737},
	journal = {Ann. Appl. Probab.},
	mrclass = {68T05 (35J60 35L02 49K45 49L20 60H30 91B06 93E20)},
	mrnumber = {4187120},
	number = {6},
	pages = {2517--2546},
	title = {On the asymptotic optimality of the comb strategy for prediction with expert advice},
	url = {https://doi.org/10.1214/20-AAP1565},
	volume = {30},
	year = {2020},
	bdsk-url-1 = {https://doi.org/10.1214/20-AAP1565}}

@ARTICLE{2023arXiv231210322C,
  title={Viscosity Solutions of a Class of Second Order Hamilton--Jacobi--Bellman Equations in the Wasserstein Space},
  author={Cheung, Hang and Tai, Ho Man and Qiu, Jinniao},
  journal={Applied Mathematics \& Optimization},
  volume={91},
  number={1},
  pages={23},
  year={2025},
  publisher={Springer}
}

@article{gozzi2000hamilton,
	author = {Gozzi, Fausto and {\'S}wiech, Andrzej},
	journal = {Journal of Functional Analysis},
	number = {2},
	pages = {466--510},
	publisher = {Elsevier},
	title = {Hamilton--Jacobi--Bellman equations for the optimal control of the Duncan--Mortensen--Zakai equation},
	volume = {172},
	year = {2000}}

@article{2020arXiv200712732D,
	AUTHOR = {Drenska, Nadejda and Kohn, Robert V.},
	TITLE = {A {PDE} approach to the prediction of a binary sequence with
	advice from two history-dependent experts},
	JOURNAL = {Comm. Pure Appl. Math.},
	FJOURNAL = {Communications on Pure and Applied Mathematics},
	VOLUME = {76},
	YEAR = {2023},
	NUMBER = {4},
	PAGES = {843--897},
	ISSN = {0010-3640,1097-0312},
	MRCLASS = {91G15 (05C81 35Q68 91A80)},
	MRNUMBER = {4569607},
}

@article{2023arXiv230604283B,
	title={{Stochastic optimal transport and Hamilton--Jacobi--Bellman equations on the set of probability measures}},
	author={Bertucci, Charles},
	journal={Annales de l'Institut Henri Poincar{\'e} C},
	year={2024}
}

@article{2022arXiv221100719T,
	title={{A finite-dimensional approximation for partial differential equations on Wasserstein space}},
	author={Talbi, Mehdi},
	journal={Stochastic Processes and their Applications},
	volume={177},
	pages={104445},
	year={2024},
	publisher={Elsevier}
}

@article {cosso2021master,
	AUTHOR = {Cosso, Andrea and Gozzi, Fausto and Kharroubi, Idris and Pham,
	Huy\^en and Rosestolato, Mauro},
	TITLE = {Master {B}ellman equation in the {W}asserstein space:
	uniqueness of viscosity solutions},
	JOURNAL = {Trans. Amer. Math. Soc.},
	FJOURNAL = {Transactions of the American Mathematical Society},
	VOLUME = {377},
	YEAR = {2024},
	NUMBER = {1},
	PAGES = {31--83},
	ISSN = {0002-9947,1088-6850},
	MRCLASS = {49L25 (35B51 35Q89 49N80)},
	MRNUMBER = {4684588},
	DOI = {10.1090/tran/8986},
	URL = {https://doi.org/10.1090/tran/8986},
}

@ARTICLE{MeYa23,
	title={Viscosity Solutions of the {Eikonal} Equation on the {Wasserstein} space},
	author={Soner, H Mete and Yan, Qinxin},
	journal={Applied Mathematics \& Optimization},
	volume={90},
	number={1},
	pages={1},
	year={2024},
	publisher={Springer}
}

@article{bayraktar2024convergence,
	title={Convergence Rate of Particle System for Second-order {PDEs} On {Wasserstein} Space},
	author={Bayraktar, Erhan and Ekren, Ibrahim and Zhang, Xin},
	journal={To appear in SIAM Journal on Control and Optimization},
	year={2024}
}

@article{zhou2024viscosity,
	title={Viscosity Solutions for {HJB} Equations on the Process Space: Application to Mean Field Control with Common Noise},
	author={Zhou, Jianjun and Touzi, Nizar and Zhang, Jianfeng},
	journal={arXiv preprint arXiv:2401.04920},
	year={2024}
}

@article{SaBe24,
abstract = {The goal of this paper is to prove a comparison principle for viscosity solutions of semilinear Hamilton–Jacobi equations in the space of probability measures. The method involves leveraging differentiability properties of the 2-Wasserstein distance in the doubling of variables argument, which is done by introducing a further entropy penalization that ensures that the relevant optima are achieved at positive, Lipschitz continuous densities with finite Fischer information. This allows to prove uniqueness and stability of viscosity solutions in the class of bounded Lipschitz continuous (with respect to the 1-Wasserstein distance) functions. The result does not appeal to a mean field control formulation of the equation, and, as such, applies to equations with nonconvex Hamiltonians and measure-dependent volatility. For convex Hamiltonians that derive from a potential, we prove that the value function associated with a suitable mean-field optimal control problem with nondegenerate idiosyncratic noise is indeed the unique viscosity solution.},
author = {Daudin, Samuel and Seeger, Benjamin},
address = {Berlin/Heidelberg},
copyright = {The Author(s) 2024. corrected publication 2024},
issn = {0944-2669},
journal = {Calculus of variations and partial differential equations},
keywords = {Control ; Mathematical analysis ; Mathematics ; System theory ; Viscosity},
language = {eng},
number = {4},
publisher = {Springer Berlin Heidelberg},
title = {A comparison principle for semilinear Hamilton–Jacobi–Bellman equations in the Wasserstein space},
volume = {63},
year = {2024},
}

@ARTICLE{BCEQTZ25,
       author = {{Bayraktar}, Erhan and {Cheung}, Hang and {Ekren}, Ibrahim and {Qiu}, Jinniao and {Tai}, Ho Man and {Zhang}, Xin},
        title = "{Viscosity Solutions of Fully second-order HJB Equations in the Wasserstein Space}",
      journal = {arXiv e-prints},
     keywords = {Mathematics - Optimization and Control, Mathematics - Analysis of PDEs, Mathematics - Probability, 49L25, 35Q93, 35B51, 58E30},
         year = 2025,
        month = jan,
          eid = {arXiv:2501.01612},
        pages = {arXiv:2501.01612},
          doi = {10.48550/arXiv.2501.01612},
archivePrefix = {arXiv},
       eprint = {2501.01612},
 primaryClass = {math.OC},
       adsurl = {https://ui.adsabs.harvard.edu/abs/2025arXiv250101612B},
      adsnote = {Provided by the SAO/NASA Astrophysics Data System}
}

@article{daudin2025well,
  title={Well-posedness of Hamilton-Jacobi equations in the Wasserstein space: non-convex Hamiltonians and common noise},
  author={Daudin, Samuel and Jackson, Joe and Seeger, Benjamin},
  journal={Communications in Partial Differential Equations},
  volume={50},
  number={1-2},
  pages={1--52},
  year={2025},
  publisher={Taylor \& Francis}
}
\bibliographystyle{plain}

\end{document}